\theoremstyle{plain} 
\newtheorem{theorem}{Theorem}[section]
\newtheorem{lemma}[theorem]{Lemma}
\newtheorem{remark}[theorem]{Remark}
\theoremstyle{plain}
\newtheorem*{theoremN}{Theorem}
\title[Elephant random walk with a power law memory]{Phase transitions for a unidirectional elephant random walk with a power law memory II: Some sharper estimates} 
\author{Rahul Roy}
\address{Indraprastha Institute of Information Technology and Indian Statistical Institute, Delhi, India}
\email{rahul.roy@iiitd.ac.in}
\author{Masato Takei}
\address{Department of Applied Mathematics, Faculty of Engineering, Yokohama National University, Yokohama, Japan}
\email{takei-masato-fx@ynu.ac.jp}
\author{Hideki Tanemura}
\address{Department of Mathematics, Keio University, Yokohama, Japan}
\email{tanemura@math.keio.ac.jp}
\begin{document}

\begin{abstract}
We continue our study of the unidirectional elephant random walk (uERW) initiated in {\it {Electron. Commun. Probab.}} ({\bf 29} 2024, article no. 78). In this paper we obtain definitive results when  the memory exponent $\beta\in (-1, p/(1-p))$. In particular using a coupling argument we obtain the exact asymptotic rate of growth of $S_n$, the location of the uERW at time $n$, for the case $\beta\in (-1, 0] $. Also, for the case $\beta\in (0, p/(1-p))$ we show that  $P(S_n \to \infty) \in (0,1)$ and conditional on $\{S_n \to \infty\}$ we obtain the exact asymptotic rate of growth of $S_n$. In addition we obtain the central limit theorem for $S_n$ when $\beta \in (-1, p/(1-p))$.
\end{abstract}

\maketitle

\section{Introduction}
\label{sec:intro}

In \cite{RoyTakeiTanemura24ECP} we had studied a version of unidirectional elephant random walk (uERW) introduced by Harbola {\it et al.} \cite{HarbolaEtal14PRE}. In this model, let $\{\beta_{n+1}: n \in \mathbb{N} \}$ be a sequence of  independent random variables with 
\begin{align}
\label{r-beta1}
P(\beta_{n+1} = k) = \begin{cases} \dfrac{\beta + 1}{n} \cdot \dfrac{\mu_k}{\mu_{n+1}} & \text { for } 1 \leq k \leq n\\
0  & \text{ otherwise}
\end{cases}
\end{align}
where, $\beta > -1$ and
\begin{equation}
\label{r-mu1}
\mu_n = \frac{\Gamma(n+ \beta)}{\Gamma(n)\Gamma( \beta +1)} \sim \frac{n^\beta}{\Gamma(\beta +1)} \quad \mbox{as $n \to \infty$.}
\end{equation}
The unidirectional ERW $\{S_n : n \geq 0\}$ is given by $S_0=0$ and, for $n \geq 1$,
\begin{align}
\label{r-lazydef2}
S _n := \sum_{k=1}^n X_k \text{ with } 
 X_1 \equiv 1,\, X_{n+1} := \begin{cases}
X_{\beta_{n+1}} & \text{with probability }p\\
0 &  \text{with probability }1-p.
\end{cases}
\end{align}
Let $\Sigma_n := \sum_{k=1}^n \mu_k X_k$ for $n \in \mathbb{N}$. In \cite{RoyTakeiTanemura24ECP} it was noted that,  for $\gamma > -1$, taking
$c_n(\gamma) := \dfrac{\Gamma(n+\gamma)}{\Gamma(n)\Gamma(\gamma+1)}$, the process
$\{M_n: n \in \mathbb{N}\}$, where
\begin{align}
\label{def:MnMart}
M_n := \dfrac{\Sigma_n}{c_n(p(\beta+1))},
\end{align}
is a non-negative martingale with 
$M_{\infty}= \displaystyle \lim_{n \to \infty}M_n$
existing almost surely.

In \cite{RoyTakeiTanemura24ECP} we had obtained different behaviour on the asymptotics of $S_n$ depending on the value of $\beta$.
These results are summarized in Table \ref{table1}. 
In both Table \ref{table1} and Table \ref{table2} we take 
\begin{align}
\Omega_{\infty}(p,\beta):= \{ \mbox{$M_{\infty}>0,\,S_n \sim   C(p,\beta) M_{\infty} n^{p (\beta+1)-\beta}$ as $n \to \infty$} \},
\end{align}
and
\begin{align}
C(p,\beta):=\dfrac{1}{p(\beta+1)-\beta} \cdot \dfrac{\Gamma(\beta+1)}{\Gamma(p(\beta+1))}. 
\label{eq:DefCbeta}
\end{align}
Here, and elsewhere, $a_n \sim b_n$, $n\to \infty$ means $a_n / b_n \to 1$, $n\to\infty$. 
%

\begin{table}[ht]
\centering
\begin{tabular}{|c||c|c|} \hline
 Regime & Asymptotic behaviour \\ \hline \hline
$-1<\beta<0$ & $P(S_{\infty}=+\infty)=1$, $P(\Omega_{\infty}(p,\beta))>0$. 
\\ \hline
$\beta=0$ & 
 $P(\Omega_{\infty}(p,0))=1$. \\ \hline
$0<\beta<\dfrac{p}{1-p}$ &  $0< P(S_{\infty} = + \infty)<1$,
$P\left(
\Omega_{\infty} (p,\beta) \mid S_{\infty} = +\infty \right)>0$. 
\\ \hline
$\beta = \dfrac{p}{1-p}$ & $E[S_n] \sim \beta \log n$, but  $P(S_{\infty} < + \infty)=1$. \\ \hline
$\beta > \dfrac{p}{1-p}$ & $E[S_{\infty}] < +\infty$, so $P(S_{\infty} < + \infty)=1$. \\ \hline
\end{tabular}
\medskip
\caption{Summary of the results obtained in \cite{RoyTakeiTanemura24ECP}.} \label{table1}
\end{table}

In this note we obtain some sharper estimates regarding the martingale sequence $\{M_n : n\geq 0\}$ for $-1 < \beta < p/(1-p)$. This allows us to have a definitive understanding of the asymptotic behaviour of $S_n$  in the different regimes
as presented in Table \ref{table2}. 

\begin{table}[ht]
\centering
\begin{tabular}{|c||c|c|} \hline
 Regime & Asymptotic behaviour \\ \hline \hline
$-1<\beta\leq 0$ & $P\left(
\Omega_{\infty}(p,\beta) \right)=1$. \\ \hline
$0<\beta<\dfrac{p}{1-p}$ &  $0 < P(S_{\infty} = + \infty ) <1$, 
$P\left(\Omega_{\infty} (p,\beta) \mid S_{\infty} = +\infty \right)=1$.\\ \hline
$\beta = \dfrac{p}{1-p}$ & $E[S_n] \sim \beta \log n$, but  $P(S_{\infty} < + \infty)=1$. \\ \hline
$\beta > \dfrac{p}{1-p}$ & $E[S_{\infty}] < +\infty$, so $P(S_{\infty} < + \infty)=1$. \\ \hline
\end{tabular}
\medskip
\caption{Behaviour of $S_n$ in different regimes.}\label{table2}
\end{table}

Moreover, taking 
\begin{align}
W_n := S_n - C(p,\beta) M_{\infty} n^{p(\beta+1)-\beta} ,
\end{align}
we obtain the central limit theorem for $\{W_n\}$ in different regimes of $\beta$.
Let $\eta$ be a non-negative random variable defined by
 \begin{align}
\eta =\sqrt{\frac{p^2(\beta+1)^2+\beta^2}{(p(\beta+1)-\beta)^2} \cdot  C(p,\beta) \cdot M_\infty}.
\label{eq:CLTlimitingV}
 \end{align}

\begin{theorem}\label{:Theorem 3.1}  Assume that $p \in (0,1)$ and let $N \stackrel{d}{=} N(0,1)$. \\
(i) If $\beta \in (-1,p/(1-p))$ then $\dfrac{W_n}{\sqrt{n^{p(\beta+1)-\beta}}} \stackrel{d}{\to}  \eta\cdot N$ as $n\to\infty$,
where $N$ is independent of $\eta$. 

\noindent
(ii) If $\beta \in (-1, 0]$ then $P(\eta >0) =1$ and $\dfrac{W_n}{\eta \sqrt{n^{p(\beta+1)-\beta}}} \stackrel{d}{\to}  N$ as $n\to\infty$. 
If $\beta \in (0, p/(1-p))$ then $\{ \eta >0\} =\{ S_\infty=\infty\}$ a.s., and $\dfrac{W_n}{\eta \sqrt{n^{p(\beta+1)-\beta}}}
\stackrel{d}{\to}  N$ as $n\to\infty$ under $P(\,\cdot \, \mid S_\infty=\infty)$. 
\end{theorem}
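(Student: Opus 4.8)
The plan is to produce an \emph{exact} decomposition of $W_n$ into two orthogonal pieces and apply a stable central limit theorem to each. Abbreviate $a := p(\beta+1)-\beta$ and $c_n := c_n(p(\beta+1))$; note $a\in(0,1)$ exactly when $\beta\in(-1,p/(1-p))$, so the scale $\sqrt{n^{p(\beta+1)-\beta}}=n^{a/2}$ diverges. Let $\xi_{k+1}:=X_{k+1}-E[X_{k+1}\mid\mathcal F_k]$ be the innovations of $S_n$; a direct computation from \eqref{r-beta1}--\eqref{r-lazydef2} gives $E[X_{k+1}\mid\mathcal F_k]=b_kM_k$ with $b_k:=p(\beta+1)c_k/(k\mu_{k+1})$, and from \eqref{def:MnMart} the martingale increments are $\Delta M_k:=M_{k+1}-M_k=(\mu_{k+1}/c_{k+1})\,\xi_{k+1}$. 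Writing $A_n:=X_1+\sum_{k=1}^{n-1}b_kM_k$ so that $S_n-A_n=\sum_{j=1}^{n-1}\xi_{j+1}$, I would substitute $M_k=M_\infty-\sum_{j\ge k}\Delta M_j$ and use Abel summation to obtain
\begin{align*}
W_n=\sum_{j=1}^{n-1}\Bigl(1-B_j\tfrac{\mu_{j+1}}{c_{j+1}}\Bigr)\xi_{j+1}-B_{n-1}(M_\infty-M_n)+\mathcal E_n,
\end{align*}
where $B_m:=\sum_{k=1}^m b_k$ and $\mathcal E_n:=X_1+M_\infty(B_{n-1}-C(p,\beta)n^a)$. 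Call the first sum $\mathrm I_n$ (a genuine $(\mathcal F_n)$-martingale with predictable weights) and the tail term $\mathrm{II}_n:=B_{n-1}(M_\infty-M_n)$; crucially $\mathrm I_n$ and $\mathrm{II}_n$ are built from the disjoint innovation blocks $\{\xi_{j+1}:j<n\}$ and $\{\xi_{j+1}:j\ge n\}$, hence are orthogonal. Since $b_k\sim a\,C(p,\beta)k^{a-1}$ one has $B_{n-1}=C(p,\beta)n^a+O(1)$, so $\mathcal E_n=o(n^{a/2})$ a.s.\ and is negligible.

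Next I would identify the two limiting variances. Because $B_j(\mu_{j+1}/c_{j+1})\to\lambda:=p(\beta+1)/a$, the weights of $\mathrm I_n$ tend to $1-\lambda=-\beta/a$; together with $\operatorname{Var}(X_{j+1}\mid\mathcal F_j)\sim E[X_{j+1}\mid\mathcal F_j]\sim a\,C(p,\beta)M_\infty j^{a-1}$ (valid since $E[X_{j+1}\mid\mathcal F_j]\to0$) and a Toeplitz argument using $M_j\to M_\infty$, this yields
\begin{align*}
\frac1{n^a}\sum_{j=1}^{n-1}\Bigl(1-B_j\tfrac{\mu_{j+1}}{c_{j+1}}\Bigr)^2\operatorname{Var}(X_{j+1}\mid\mathcal F_j)\;\longrightarrow\;\frac{\beta^2}{a^2}\,C(p,\beta)M_\infty\quad\text{a.s.}
\end{align*}
For the tail, $E[(\Delta M_j)^2\mid\mathcal F_j]\sim(\lambda^2aM_\infty/C(p,\beta))\,j^{-a-1}$, so $n^a\sum_{j\ge n}E[(\Delta M_j)^2\mid\mathcal F_j]\to\lambda^2M_\infty/C(p,\beta)$, giving the contribution $\lambda^2C(p,\beta)M_\infty=(p^2(\beta+1)^2/a^2)C(p,\beta)M_\infty$ from $\mathrm{II}_n/n^{a/2}$. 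The two contributions add to $\eta^2$, which accounts for \eqref{eq:CLTlimitingV}.

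With the variances in hand I would prove the two CLTs and combine them by conditioning. A stable martingale central limit theorem (of Hall--Heyde type) applies to $\mathrm I_n/n^{a/2}$: the conditional Lindeberg condition is automatic since the innovations are uniformly bounded while $n^{a/2}\to\infty$, yielding $\mathcal F_\infty$-stable convergence $\mathrm I_n/n^{a/2}\to(|\beta|/a)\sqrt{C(p,\beta)M_\infty}\,N_1$. The hard part will be the \emph{tail} CLT for $\mathrm{II}_n$, namely $E[\exp(is\,n^{a/2}(M_\infty-M_n))\mid\mathcal F_n]\to\exp(-\tfrac12 s^2\lambda^2M_\infty/C(p,\beta))$ in probability, because the usual martingale CLT concerns growing partial sums rather than a shrinking tail; I expect to establish it either through a central-limit supplement to the martingale convergence theorem (in the spirit of Heyde) or directly, by expanding the conditional characteristic function as the telescoping product $\prod_{j\ge n}E[\exp(is\,n^{a/2}\Delta M_j)\mid\mathcal F_j]$, Taylor-expanding to second order, and controlling the remainder via $\sup_{j\ge n}n^{a/2}|\Delta M_j|\le(\lambda/C(p,\beta)+o(1))\,n^{-a/2}\to0$ and the convergence of $n^a\sum_{j\ge n}E[(\Delta M_j)^2\mid\mathcal F_j]$ from the previous step. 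Finally, conditioning on $\mathcal F_n$ gives $E[e^{itW_n/n^{a/2}}]\approx E\bigl[e^{it\mathrm I_n/n^{a/2}}E[e^{-it\mathrm{II}_n/n^{a/2}}\mid\mathcal F_n]\bigr]$; the inner factor tends to the $M_\infty$-measurable limit $\exp(-\tfrac12 t^2(p^2(\beta+1)^2/a^2)C(p,\beta)M_\infty)$, and $\mathcal F_\infty$-stability of $\mathrm I_n/n^{a/2}$ lets me pass to the limit in the outer expectation, producing $E[e^{-t^2\eta^2/2}]$. This is the characteristic function of $\eta N$ with $N\sim N(0,1)$ independent of $\eta$, which proves (i).

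For (ii) I would identify $\{\eta>0\}=\{M_\infty>0\}$ from \eqref{eq:CLTlimitingV}. When $\beta\in(-1,0]$, Table~\ref{table2} gives $P(\Omega_\infty(p,\beta))=1$, hence $P(\eta>0)=1$, and dividing the mixed-normal limit by the $\mathcal F_\infty$-measurable factor $\eta$ converts $\eta N$ into $N$; this rescaling is legitimate precisely because the convergence in (i) is stable. When $\beta\in(0,p/(1-p))$, Table~\ref{table2} gives $\{M_\infty>0\}=\{S_\infty=\infty\}$ a.s., so $\{\eta>0\}=\{S_\infty=\infty\}$ a.s.; performing the same division under $P(\,\cdot\mid S_\infty=\infty)$, where $\eta>0$, delivers the stated standard-normal limit.
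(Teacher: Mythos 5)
Your architecture is sound and, at the level of the decomposition, essentially reproduces the paper's: your identity $W_n=\mathrm{I}_n-\mathrm{II}_n+\mathcal{E}_n$, with head weights tending to $-\beta/a$ and tail coefficient $B_{n-1}\sim C(p,\beta)n^{a}$, is (up to replacing $B_{n-1}$ by $\Gamma(n+p(\beta+1))/\Gamma(n+\beta)$ and $\mathcal{E}_n$ by $R_n$) exactly the paper's representation $W_n/\sqrt{n^{p(\beta+1)-\beta}}=-\sum_{k=1}^{\infty}X_{n,k}+R_n$ in Section \ref{sec:CLT}, and your two variance limits are precisely \eqref{;N_1} and \eqref{;N_2}. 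Where you genuinely differ is the limit-theorem technology. The paper treats head and tail as a \emph{single} infinite martingale difference array and quotes the stable CLT of H\"ausler and Luschgy, whose hypotheses \eqref{:N}--\eqref{:CLB} are your variance limits plus a Lindeberg condition (checked there via fourth moments); the ``tail CLT'' that you correctly single out as the hard part of your route, and the self-normalized statement needed for part (ii), are both packaged inside that one theorem. You instead split the work: Hall--Heyde for $\mathrm{I}_n$, a McLeish-style product/Taylor expansion of the conditional characteristic function for $\mathrm{II}_n$, and gluing by conditioning on $\mathcal{F}_n$. This can be pushed through: your key estimates ($n^{a/2}\sup_{j\ge n}|\Delta M_j|=O(n^{-a/2})$ deterministically, and a.s.\ convergence of $n^{a}\sum_{j\ge n}E[(\Delta M_j)^2\mid\mathcal{F}_j]$) are the right ingredients, though you will additionally need $n^{a}\sum_{j\ge n}(\Delta M_j)^2\to\sigma_{\mathrm{tail}}^2$ in probability (a routine second-moment estimate using $|\Delta M_j|\le Cj^{-a}$) to control the cubic remainder and the modulus of the product. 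The one step you must not gloss over: as written, your gluing argument yields only convergence in distribution of $W_n/n^{a/2}$, whereas part (ii) requires the convergence in (i) to be \emph{stable} in order to divide by the $\mathcal{F}_\infty$-measurable $\eta$. The fix is standard — run the same conditioning identity with an extra bounded $\mathcal{F}_m$-measurable test factor (which is $\mathcal{F}_n$-measurable once $n\ge m$, so the identity survives), then extend to bounded $\mathcal{F}_\infty$-measurable factors by $L^1$ approximation — but it has to be said, since it is exactly what licenses the division by $\eta$ in (ii). Net comparison: your route is more elementary and self-contained, at the cost of essentially reproving, for this particular tail martingale, what the paper imports wholesale from H\"ausler--Luschgy; the paper's route is shorter and delivers stability and the self-normalized CLT in one stroke.
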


\begin{remark}
The case $\beta=0$ was obtained by Miyazaki and Takei \cite{MiyazakiTakei20JSP}, based on the ideas in Kubota and Takei \cite{KubotaTakei19JSP}.
\end{remark}

In the next section we introduce an auxiliary process. 
In Section \ref{sec:Postivity} we have a coupling argument, however not with a branching process as in \cite{RoyTakeiTanemura24ECP}, and we prove the results displayed in Table \ref{table2}. 
In Section \ref{sec:CLT} we prove Theorem \ref{:Theorem 3.1}. These results together with the limit laws for $S_n$ obtained in \cite{RoyTakeiTanemura24ECP} for the other cases of $\beta$ complete our understanding of the uERW with a power law memory.


\section{A modified version of the process}

Let $\{x_n\}_{n\in\mathbb{N}}$ be a sequence of $0$'s and $1$'s, and let 
\begin{align} \label{def:Subset}
 \mathbb{S}:=\{k\in\mathbb{N} : x_k=1  \}. 
\end{align}
Let $1 \le s_1 < s_2 < \cdots $ be the ordering of all elements of $\mathbb{S}$ and
\begin{align}\label{;def_mn}
m_n=m_n(\mathbb{S}):=\# \{k\in\mathbb{N} : s_1 < k \le n , x_k=0  \}. 
\end{align}
We assume that $\{x_n\}_{n\in\mathbb{N}}$ satisfies the following:
there exists  $N_0=N_0(\mathbb{S})$ such that
\begin{align}\label{;condA}
m_n \le  n^{p(\beta+1)-\beta} \quad \mbox{for all $n\ge N_0$.}
\end{align}
Note that ${p(\beta+1)-\beta} \in (0,1)$ for $\beta \in (-1, p/(1-p))$.

We introduce a modified version of the process.
Let $\{ \tilde{\beta}_{n+1}: n\in\mathbb{N} \}$ be a collection of independent random variables on the same probability space as earlier, but with a probability measure $P^{\mathbb{S}}$ given by
\begin{align}\label{;beta_t}
P^{\mathbb{S}}(\tilde{\beta}_{n+1}=k) =
\begin{cases}
\displaystyle{ w(n,k):=\frac{x_k \mu_k}{\sum_{\ell=1}^n \mu_\ell} }\quad &\text{for $1\le k \le n$}
\\
1-\sum_{\ell=1}^n w(n,\ell) &\text{for $k=0$}
\\
0 &\text{otherwise.}
\end{cases}
\end{align}
For $s_1=k \in \mathbb{N}$, let
\begin{align*}
&\text{$Y_{\ell}=0$ for $0 \le \ell \le k-1$,  $Y_k = 1$, and}
\\
&\text{for $n \ge  k$, }Y_{n+1} = \begin{cases}
x_{n+1}Y_{\tilde{\beta}_{n+1}} &\text{with probability $p$}
\\
0 &\text{with probability $1-p$.}
\end{cases}
\end{align*}
The modified models for $S_n$ and $\Sigma_n$ are given by $T_0=\Xi_0=0$,
\begin{align}
T_n := \sum_{k=1}^n Y_k \quad \text{and} \quad 
\Xi_n := \sum_{k=1}^n \mu_k Y_k,
\ n\in\mathbb{N},
\end{align}
respectively. 

\noindent {\bf NOTATION:} For any sequences $\{a_n\}, \{b_n\}$,
\begin{itemize}
\item $a_n \asymp b_n$, $n\to \infty$ means that $c a_n \le b_n \leq Ca_n$ for some $0<c \le C<\infty$,
\item $a_n \simeq b_n$ means that $\{ |a_n -b_n| \} \le C$, $n\in\mathbb{N} $ for some $C>0$.
\end{itemize}
Hereafter the constants $c$ and $C$ may depend on $\beta > -1$, $p\in (0,1)$, and $\mathbb{S}$. 
\begin{lemma} \label{thm:XiSigmaMeanGen} 
Let $p\in (0,1)$, $\beta \in (-1,p/(1-p))$, and $\mathbb{S}$ be as in \eqref{def:Subset}.
Under the condition \eqref{;condA},
\begin{align*}
E^{\mathbb{S}}[\Xi_n] \asymp n^{p(\beta+1)},
\quad n\to\infty.
\end{align*}
\end{lemma}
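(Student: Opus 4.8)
The plan is to reduce the statement to a deterministic product by computing a one-step recursion for $a_n := E^{\mathbb{S}}[\Xi_n]$. First I would condition on the natural filtration $\mathcal{F}_n$ generated by $Y_1,\dots,Y_n$. Since $\tilde{\beta}_{n+1}$ is independent of $\mathcal{F}_n$ and $Y_0=0$, and since by construction $Y_k\neq 0$ forces $x_k=1$ (so that $x_k Y_k=Y_k$), one gets
\[
E^{\mathbb{S}}[Y_{n+1}\mid \mathcal{F}_n]=p\,x_{n+1}\sum_{k=1}^n Y_k\,\frac{x_k\mu_k}{\sum_{\ell=1}^n\mu_\ell}=p\,x_{n+1}\,\frac{\Xi_n}{\sum_{\ell=1}^n\mu_\ell}.
\]
Multiplying by $\mu_{n+1}$ and adding $\Xi_n$ yields $E^{\mathbb{S}}[\Xi_{n+1}\mid\mathcal{F}_n]=\Xi_n\bigl(1+p\,x_{n+1}\mu_{n+1}/\sum_{\ell=1}^n\mu_\ell\bigr)$, and taking expectations gives a clean scalar recursion. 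The exact gamma-function identity $\sum_{\ell=1}^{m}\mu_\ell=\frac{1}{\beta+1}\frac{\Gamma(m+\beta+1)}{\Gamma(m)\Gamma(\beta+1)}$ then collapses the ratio to $\mu_{n+1}/\sum_{\ell=1}^n\mu_\ell=(\beta+1)/n$, so that, since $\Xi_{s_1}=\mu_{s_1}$ deterministically,
\[
a_n=\mu_{s_1}\prod_{m=s_1+1}^{n}\Bigl(1+\frac{p(\beta+1)\,x_m}{m-1}\Bigr).
\]

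Next I would split this product. Writing the product as if every $x_m=1$ and then dividing out the factors at the zeros of the sequence, I obtain
\[
a_n=\mu_{s_1}\cdot\underbrace{\prod_{m=s_1+1}^{n}\Bigl(1+\frac{p(\beta+1)}{m-1}\Bigr)}_{=:A_n}\cdot\prod_{\substack{s_1<m\le n\\ x_m=0}}\Bigl(1+\frac{p(\beta+1)}{m-1}\Bigr)^{-1}.
\]
By telescoping gamma functions and Stirling's formula, $A_n=\frac{\Gamma(s_1)\,\Gamma(n+p(\beta+1))}{\Gamma(s_1+p(\beta+1))\,\Gamma(n)}\asymp n^{p(\beta+1)}$, which already supplies the correct main order. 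Since the correction product over the zeros lies in $(0,1]$, the upper bound $a_n\le C\,n^{p(\beta+1)}$ is immediate.

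The crux, and the step I expect to be the main obstacle, is the matching lower bound, namely showing the correction factor is bounded away from $0$ uniformly in $n$. Equivalently, one must prove that $\sum_{s_1<m\le n,\ x_m=0}1/(m-1)$ stays bounded, and this is exactly where condition \eqref{;condA} enters. Here I would let $z_j$ denote the position of the $j$-th zero after $s_1$; since $m_{z_j}=j$, \eqref{;condA} gives $j\le z_j^{\,\theta}$ with $\theta:=p(\beta+1)-\beta\in(0,1)$ for all $z_j\ge N_0$, hence $z_j\ge j^{1/\theta}$. Because $1/\theta>1$, the tail $\sum_j z_j^{-1}\le \sum_j j^{-1/\theta}$ converges, while the finitely many zeros below $N_0$ contribute only a bounded amount; thus $-\log(\text{correction})\le\sum_{\text{zeros}}\frac{p(\beta+1)}{m-1}\le C$. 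This yields $a_n\ge c\,n^{p(\beta+1)}$, and combined with the upper bound proves $E^{\mathbb{S}}[\Xi_n]\asymp n^{p(\beta+1)}$. The only place where the hypothesis $\beta<p/(1-p)$ (equivalently $\theta<1$) is essential is precisely the convergence of $\sum_j j^{-1/\theta}$.
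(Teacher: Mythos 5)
Your proof is correct and follows essentially the same route as the paper: the same conditional-expectation recursion yields the product formula $E^{\mathbb{S}}[\Xi_n]=\mu_{s_1}\prod_{k}(1+p w_k)$, and your key step --- using \eqref{;condA} to deduce that the $j$-th zero after $s_1$ sits beyond roughly $j^{1/(p(\beta+1)-\beta)}$, so the harmonic sum over the zeros converges --- is exactly the paper's argument. The only difference is bookkeeping: you split the product multiplicatively and evaluate the full product exactly by Gamma-function telescoping, whereas the paper compares $\log E^{\mathbb{S}}[\Xi_n]$ additively with $p(\beta+1)\log n$ (which additionally requires the elementary bound $\sum_k w_k^2<\infty$); both reductions rest on the same estimate.
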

\begin{proof}
Let $\mathcal{F}_n$ be the $\sigma$-algebra generated by $Y_1,\ldots,Y_n$.
Recalling that
\begin{align}\label{;sum_mu}
\sum_{\ell=1}^n \mu_\ell = \frac{n}{\beta+1}\mu_{n+1}= c_n(\beta+1),
\end{align}
and $w_n:= \dfrac{x_{n+1}\mu_{n+1}}{\sum_{\ell=1}^n \mu_\ell} =\dfrac{(\beta+1)x_{n+1}}{n}$,  
we see that
\begin{align*}
E^{\mathbb{S}}[Y_{n+1} \mid \mathcal{F}_n] 
= px_{n+1}\cdot  E^{\mathbb{S}}[ Y_{\tilde{\beta}_{n+1}} \mid \mathcal{F}_n]
&=p x_{n+1} \sum_{k=1}^n  \dfrac{\mu_k}{\sum_{\ell=1}^n \mu_\ell}Y_k 
=\dfrac{p w_n}{\mu_{n+1}} \cdot \Xi_n.
\end{align*}
Noting that $Y_k=0$ if $x_k=0$, the above holds irrespective of whether $n+1$ is smaller or larger than $s_1$.
Then
$E^{\mathbb{S}}[\Xi_{n+1} \mid \mathcal{F}_n] = (1+pw_n) \Xi_n$, 
and
\begin{align}\label{;2g}
E^{\mathbb{S}}[\Xi_{n+1}] &=  (1+pw_n)E^{\mathbb{S}}[\Xi_n]
=\mu_{s_1}\prod_{k=s_1}^n \left(1+pw_k\right).
\end{align}

In the special case $\mathbb{S}=\mathbb{N}$, 
we have $w_k=\check{w}_k:= \dfrac{\beta+1}{k}$, and 
$\log E[\Sigma_{n+1}] \simeq p(\beta+1) \log n$, $n\to\infty$.
We now show that for any $\mathbb{S}$ satisfying the condition \eqref{;condA},
\begin{align}\label{;0b}
\log E^{\mathbb{S}}[\Xi_{n+1}] \simeq p(\beta+1) \log n, \quad n\to\infty.
\end{align}
To prove this, in view of \eqref{;2g}, it is enough to show that under \eqref{;condA},
\begin{align} \label{;1a+}
\sum_{k=s_1}^n  \log\left( 1 + pw_k \right) \simeq p \sum_{k=s_1}^n \check{w}_k.
\end{align}
Using $w_k \le \check{w}_k$ and $x-\dfrac{x^2}{2} \le \log(1+x) \le x$ for $x \ge 0$, we have
\begin{align} \label{;1a++}
0 \le p\check{w}_k - pw_k \le p\check{w}_k - \log(1+pw_k) \le  p\check{w}_k - pw_k+\dfrac{(pw_k)^2}{2}.
\end{align}
Since $\beta+1>0$, 
\begin{align}\label{;1b}
\sum_{k=1}^{\infty} w_k^2  \leq \sum_{k=1}^{\infty} \frac{(\beta+1)^2}{k^2} <\infty.
\end{align}
Now we estimate $\displaystyle \sum_{k=s_1}^n (\check{w}_k - w_k) = \sum_{k=s_1}^n \dfrac{1-x_{k+1}}{k}$.
Let $u_1 < u_2 < \cdots$ be the ordering of all elements of $\{k \in \mathbb{N} : s_1<k,x_{k+1}=0\}$.
By \eqref{;condA},
\[ J_n := \# \{k \in \mathbb{N} : s_1<k \le n ,x_{k+1}=0\} \le m_{n+1} \le (n+1)^{p(\beta+1)-\beta} \mbox{ for $n \ge N_0$.} \]
Noting that $u_j < n$ implies $j \le J_n \le (n+1)^{p(\beta+1)-\beta}$, for all $j \ge N_0$,
\[ u_j \ge \min \{ n \in \mathbb{N} : (n+1)^{p(\beta+1)-\beta} > j\} > j^{1/\{p(\beta+1)-\beta\}} -1. \]
Since $\displaystyle \sum_{j=1}^{\infty} \dfrac{1}{j^{1/\{p(\beta+1)-\beta\}}} <\infty$, we can find a positive constant $K=K(\mathbb{S})$ such that
\begin{align} \label{;1a+++}
\sum_{k=s_1}^n (\check{w}_k - w_k) = \sum_{k=s_1}^n \dfrac{1-x_{k+1}}{k}= \sum_{j=1}^{J_n} \dfrac{1}{u_j} \le K. 
\end{align}
Combining \eqref{;1b}--\eqref{;1a+++}, we obtain \eqref{;1a+}. This completes the proof.
\end{proof}



\begin{lemma} \label{lem:Sigma2Ratio} Let $p\in (0,1)$, $\beta \in (-1,p/(1-p))$. For any $\mathbb{S}$ as in \eqref{def:Subset} satisfying \eqref{;condA}, we have \\
(i) 
For $\beta \in (-1,0)$, there is a positive constant $K=K(p,\beta)$ not depending on $\mathbb{S}$ such that 
\begin{align*}
\dfrac{E^{\mathbb{S}}[\Xi_n^2]}{(E^{\mathbb{S}}[\Xi_n])^2} \le K
\text{ for $n\in\mathbb{N}$.}
 \end{align*}
(ii) 
Suppose $m\in \mathbb{N}$ with $s_m=N_0$, where $N_0$ is defined in \eqref{;condA}.
Put 
\begin{align}\label{:S_hat}
\hat{ \mathbb{S} } =\{ \hat{s}_i\}_{i\in\mathbb{N}} := \{ s_{m-1+i}\}_{i \in \mathbb{N}}.
\end{align}
For $\beta \in [0,p/(1-p))$, there is a positive constant $K=K(p,\beta)$ such that 
\begin{align*}
\dfrac{E^{\hat{\mathbb{S}}}[\Xi_n^2]}{ (E^{\hat{\mathbb{S}}}[\Xi_n])^2} \le K
\text{ for $n>\hat{s}_1$.}
\end{align*}
\end{lemma}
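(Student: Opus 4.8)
The plan is to control the normalized second moment $r_n := E^{\mathbb{S}}[\Xi_n^2]/(E^{\mathbb{S}}[\Xi_n])^2$ through a one-step recursion and then reduce the whole statement to the summability of one explicit series, uniformity in $\mathbb{S}$ being the only delicate point. Writing $a_n := E^{\mathbb{S}}[\Xi_n]$ and $b_n := E^{\mathbb{S}}[\Xi_n^2]$, I would expand $\Xi_{n+1}^2 = \Xi_n^2 + 2\mu_{n+1}Y_{n+1}\Xi_n + \mu_{n+1}^2 Y_{n+1}^2$. Since $Y_{n+1}\in\{0,1\}$ forces $Y_{n+1}^2 = Y_{n+1}$, and since $E^{\mathbb{S}}[Y_{n+1}\mid\mathcal F_n] = (pw_n/\mu_{n+1})\Xi_n$ was already obtained in the proof of Lemma \ref{thm:XiSigmaMeanGen}, taking conditional then full expectations gives $b_{n+1} = (1+2pw_n)\,b_n + pw_n\,\mu_{n+1}\,a_n$. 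Together with $a_{n+1}=(1+pw_n)a_n$ from \eqref{;2g}, this yields
\begin{align*}
r_{n+1} = \frac{1+2pw_n}{(1+pw_n)^2}\,r_n + \frac{pw_n\,\mu_{n+1}}{(1+pw_n)^2\,a_n} .
\end{align*}
As $(1+2pw_n)/(1+pw_n)^2\le 1$, iterating from $n=s_1$, where $\Xi_{s_1}=\mu_{s_1}$ is deterministic so that $r_{s_1}=1$, leaves $r_n \le 1 + \sum_{k=s_1}^{\infty} pw_k\,\mu_{k+1}/a_k$, and the lemma reduces to bounding this series uniformly in $\mathbb{S}$.

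Second, I would put the summand into telescoping form. Using $pw_k\,a_k = a_{k+1}-a_k$ and $a_{k+1}/a_k=1+pw_k\le 1+p(\beta+1)$,
\begin{align*}
\frac{pw_k\,\mu_{k+1}}{a_k} = \mu_{k+1}\,\frac{a_{k+1}-a_k}{a_k^2} \le \bigl(1+p(\beta+1)\bigr)\,\mu_{k+1}\left(\frac1{a_k}-\frac1{a_{k+1}}\right),
\end{align*}
since $1/a_k-1/a_{k+1}=(a_{k+1}-a_k)/(a_k a_{k+1})$. Abel summation then gives
\begin{align*}
\sum_{k=s_1}^{n-1} \mu_{k+1}\left(\frac1{a_k}-\frac1{a_{k+1}}\right) \le \frac{\mu_{s_1+1}}{a_{s_1}} + \sum_{k=s_1+1}^{n-1}\frac{\mu_{k+1}-\mu_k}{a_k},
\end{align*}
after discarding the nonnegative final term $\mu_n/a_n$; the boundary term is $\mu_{s_1+1}/\mu_{s_1}$, bounded by a constant depending only on $\beta$.

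Third, the two regimes now split exactly according to the monotonicity of $\mu_k\asymp k^\beta$. For part (i), $\beta\in(-1,0)$, the sequence $\mu_k$ is decreasing, so $\mu_{k+1}-\mu_k\le 0$ and the telescoped sum is at most $\mu_{s_1+1}/\mu_{s_1}\le 1$; this uses only $\beta<0$, is manifestly free of $\mathbb{S}$, and (notably) does not even invoke a lower bound on $a_k$. This is exactly the uniform bound required in (i).

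Fourth, for part (ii), $\beta\in[0,p/(1-p))$, the sequence $\mu_k$ is nondecreasing, so $\sum_k(\mu_{k+1}-\mu_k)/a_k$ is genuinely present and must be estimated with $\mu_{k+1}-\mu_k\asymp k^{\beta-1}$. Lemma \ref{thm:XiSigmaMeanGen} with the estimates \eqref{;1b}--\eqref{;1a+++} furnishes, writing $\theta:=p(\beta+1)-\beta\in(0,1)$, a lower bound of the form $a_k \ge c\,\hat s_1^{-\theta}\,k^{p(\beta+1)}$ for $k\ge\hat s_1$. The sole purpose of replacing $\mathbb{S}$ by the shifted set $\hat{\mathbb{S}}$ of \eqref{:S_hat} is to make \eqref{;condA} hold from the very first index $\hat s_1=N_0$, so that the constant $K$ in \eqref{;1a+++}, hence $c$, depend only on $p,\beta$. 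Then
\begin{align*}
\sum_{k\ge\hat s_1}\frac{\mu_{k+1}-\mu_k}{a_k} \le \frac{C\,\hat s_1^{\theta}}{c}\sum_{k\ge\hat s_1} k^{\beta-1-p(\beta+1)},
\end{align*}
and $\beta<p/(1-p)$ makes the exponent $\beta-1-p(\beta+1)=-1-\theta<-1$, so the tail is $\asymp \hat s_1^{-\theta}$ and the two powers of $\hat s_1$ cancel, leaving a bound in $p,\beta$ only. The main obstacle is precisely this cancellation: the crude bound $a_k\ge c(\mathbb{S})\,k^{p(\beta+1)}$ carries an $\mathbb{S}$-dependent constant (through $N_0$ and the finitely many early gaps of $\mathbb{S}$), so the uniformity asserted in (ii) is not automatic. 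Tracking the exact $\hat s_1^{\pm\theta}$ dependence, rather than merely the convergence of the series, together with starting the sum at $\hat s_1$, is the crux of the argument; part (i) sidesteps it entirely because the decreasing $\mu_k$ lets one discard the bulk sum.
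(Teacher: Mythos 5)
Your proof is correct, and it follows the paper's skeleton --- the same one-step second-moment recursion $E^{\mathbb{S}}[\Xi_{n+1}^2 \mid \mathcal{F}_n] = (1+2pw_n)\Xi_n^2 + pw_n\mu_{n+1}\Xi_n$, reduction of $r_n$ to a series, and the observation that shifting to $\hat{\mathbb{S}}$ makes \eqref{;condA} hold from the first index so that the gap-sum bound $\sum_{\ell}(1-x_{\ell+1})/\ell \le \sum_j j^{-1/\{p(\beta+1)-\beta\}}$ becomes uniform in $\mathbb{S}$ --- but your middle estimation device is genuinely different and, for part (i), cleaner. The paper normalizes $\Xi_n^2$ by $\prod_{\ell}(1+2pw_\ell)$ and arrives at $r_n \le 1 + p\sum_{k}\frac{w_k\mu_{k+1}}{\mu_{s_1}}\prod_{\ell=s_1}^{k-1}\frac{1+pw_\ell}{1+2pw_\ell}$, which is the same series as your $\sum_k pw_k\mu_{k+1}/a_k$ up to the bounded factor $\prod_\ell\frac{(1+pw_\ell)^2}{1+2pw_\ell} \le \exp\bigl(\sum_\ell p^2w_\ell^2\bigr)$; the divergence is in how this series is bounded. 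For (i) the paper bounds the product by $1$ and invokes the convergence $\frac{\beta+1}{\mu_{s_1}}\sum_{k\ge s_1}\frac{\mu_{k+1}}{k} \to \frac{\beta+1}{-\beta}$, which uses the asymptotics \eqref{r-mu1} and produces a constant that degenerates as $\beta \uparrow 0$; your Abel-summation step, which transfers the increment onto $\mu_{k+1}-\mu_k$ and then discards the bulk sum by monotonicity of $\mu_k$ for $\beta<0$, needs no asymptotics of $\mu$, no lower bound on $a_k$, and yields the bound $r_n \le 2+p(\beta+1)$, uniform even in $\beta$. For (ii) the two arguments essentially coincide after rearrangement: the paper estimates the product $\prod_\ell\frac{1+pw_\ell}{1+2pw_\ell} \le C_1(k/\hat{s}_1)^{-p(\beta+1)}\exp\bigl(p(\beta+1)\sum_\ell\frac{1-x_{\ell+1}}{\ell}\bigr)$ and sums $\mu_{k+1}k^{-1-p(\beta+1)}$, while you convert the same product estimate into the lower bound $a_k \ge c\,\hat{s}_1^{-\theta}k^{p(\beta+1)}$ and sum $(\mu_{k+1}-\mu_k)k^{-p(\beta+1)}$; both hinge on exactly the cancellation of $\hat{s}_1^{\theta}$ against the tail $\sum_{k\ge \hat{s}_1}k^{-1-\theta} \asymp \hat{s}_1^{-\theta}$ that you single out as the crux. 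One small bookkeeping point: the constant in \eqref{;1a+++} as stated in the paper is $K(\mathbb{S})$; the $\mathbb{S}$-free version you need is the $u_j \ge v_j$ comparison the paper runs inside part (ii), which is available to you for precisely the reason you give (the shift puts $N_0$ at the first index), so this does not affect correctness.
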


\begin{proof}
Note that calculations similar to those at the beginning of the proof of Theorem 2.1 give us
\begin{align*}
E^{\mathbb{S}}[\Xi_{n+1}^2 \mid \mathcal{F}_n] 
=\left(1+2p w_n \right) \cdot \Xi_n^2 + pw_n\mu_{n+1}\Xi_n.
\end{align*}
Setting $\tilde{L}_n := \Xi_n^2 / \prod_{\ell=s_1}^{n-1} \left(1+2p w_\ell \right)$,
we have
\begin{align*}
E^{\mathbb{S}}[\tilde{L}_{n+1}] - E^{\mathbb{S}}[\tilde{L}_n] =  \dfrac{pw_n\mu_{n+1} E^{\mathbb{S}}[\Xi_n]}{\prod_{\ell=s_1}^{n-1} \left(1+2p w_\ell \right)} {= pw_n\mu_{n+1} \mu_{s_1} \prod_{\ell=s_1}^{n-1} \dfrac{1+p w_\ell}{1+2p w_\ell}},  
\end{align*}
where we have used \eqref{;2g}. Then we see that
\begin{align*}
E^{\mathbb{S}}[\tilde{L}_n]
&=\mu_{s_1}^2+p\mu_{s_1} \sum_{k=s_1}^{n-1}  w_k\mu_{k+1} \prod_{\ell=s_1}^{k-1} \dfrac{1+p w_\ell}{1+2p w_\ell}.
\end{align*}
From \eqref{;2g}, 
\begin{align*}
 \dfrac{E^{\mathbb{S}} [\Xi_n^2]}{(E^{\mathbb{S}}[\Xi_n])^2} &= \prod_{j=s_1}^{n-1}  \frac{1+2pw_j}{(1+pw_j)^2} \cdot \dfrac{E^{\mathbb{S}}[\tilde{L}_n]}{\mu_{s_1}^2} \le \dfrac{E^{\mathbb{S}}[\tilde{L}_n]}{\mu_{s_1}^2} 
=1+ p \sum_{k=s_1}^{n-1} \frac{w_k\mu_{k+1}}{\mu_{s_1}}  \prod_{\ell=s_1}^{k-1} 
\frac{1+pw_\ell}{1+2pw_\ell}.
\end{align*}
(i) 
For $\beta \in (-1,0)$,
\begin{align*}
\sum_{k=s_1}^{n-1} \frac{ w_k\mu_{k+1}}{\mu_{s_1}} \prod_{\ell=s_1}^{k-1} \frac{1+pw_\ell}{1+2pw_\ell}
\le 
\frac{\beta+1}{\mu_{s_1}} \sum_{k=s_1}^{\infty} \frac{\mu_{k+1}}{k}  \to \dfrac{\beta+1}{-\beta} \mbox{ as $s_1\to \infty$,}
\end{align*}
where we have used \eqref{r-mu1}.
Thus we have (i).

\noindent (ii) 
For $\beta \in [0,p/(1-p))$ and $\hat{s}_1$ as in \eqref{:S_hat},
\begin{align}
\sum_{k=\hat{s}_1}^{n-1} \frac{\mu_{k+1}}{\mu_{\hat{s}_1}} w_k \prod_{\ell=\hat{s}_1}^{k-1} \frac{1+pw_\ell}{1+2pw_\ell} 
\le \dfrac{1}{\mu_{\hat{s}_1}}
\sum_{k=\hat{s}_1}^{n-1} \frac{\mu_{k+1}}{k}  \prod_{\ell=\hat{s}_1}^{k-1} \frac{1+pw_\ell}{1+2pw_\ell} .
 \label{:210a}
\end{align}
Since
\begin{align*}
\log \prod_{\ell=\hat{s}_1}^{k-1} \frac{1+pw_\ell}{1+2pw_\ell} &\le -p\sum_{\ell=\hat{s}_1}^{k-1}w_{\ell}+\dfrac{p^2}{2} \sum_{ \ell=\hat{s}_1}^{k-1} w_{\ell}^2 \\
&\le -p(\beta+1) \sum_{\ell=\hat{s}_1}^{k-1} \dfrac{x_{\ell+1}}{\ell} + \dfrac{p^2}{2} \sum_{ \ell=1}^{\infty} \dfrac{(\beta+1)^2}{\ell^2} \\
&= p(\beta+1) \sum_{\ell=\hat{s}_1}^{k-1} \dfrac{1-x_{\ell+1}}{\ell} - p(\beta+1) \sum_{\ell=\hat{s}_1}^{k-1} \dfrac{1}{\ell} + \dfrac{p^2}{2} \sum_{ \ell=1}^{\infty} \dfrac{(\beta+1)^2}{\ell^2},
\end{align*}
we have
\begin{align}
 \prod_{\ell=\hat{s}_1}^{k-1} \frac{1+pw_\ell}{1+2pw_\ell} &\le C_1 \left(\dfrac{k}{\hat{s}_1}\right)^{-p(\beta+1)} \exp \left(  p(\beta+1) \sum_{\ell=\hat{s}_1}^{k-1} \dfrac{1-x_{\ell+1}}{\ell} \right), \label{:210a+}
\end{align}
where $C_1=C_1(p,\beta)>0$ is a constant independent of $\mathbb{S}$.
We define 
\[ 
\mathcal{A} := \{ \{ u_j \} : \# ( \{ u_j \} \cap \{1,2,\ldots,n\} ) \leq n^{p(\beta+1)-\beta} \mbox{ for all $n \in \mathbb{N}$} \}. 
\]
Put $\{ u_j \} := \{ k : \hat{s}_1 < k,\,x_k=0 \}$ and $v_j := \min \{ n \in \mathbb{N} : n^{p(\beta+1)-\beta} \ge j \}$ for $j \in \mathbb{N}$.
Then$\{u_j\}, \{v_j\} \in \mathcal{A}$, and $v_j \leq u_j$ for $j \in \mathbb{N}$.  
Since $p(\beta+1)-\beta \in (0,1)$,
\[ 
\sum_{\ell=\hat{s}_1}^{k-1} \frac{ \mathbf{1}(x_{\ell+1}=0)}{\ell}=\sum_{\ell \in \{u_j\}} \dfrac{1}{\ell} \leq \sum_{\ell \in \{v_j\}} \dfrac{1}{\ell}  \le \sum_{j=1}^{\infty} \dfrac{1}{j^{1/\{p(\beta+1)-\beta\}}}< \infty.  
\]
Then from \eqref{:210a} and  \eqref{:210a+} we have
\[
\sum_{k=\hat{s}_1}^{n-1} \frac{\mu_{k+1}}{\mu_{\hat{s}_1}} w_k \prod_{\ell=\hat{s}_1}^{k-1} \frac{1+pw_\ell}{1+2pw_\ell} \le  \dfrac{C_2}{\mu_{\hat{s}_1}\hat{s}_1^{-p(\beta+1)}}
\sum_{k=\hat{s}_1}^{\infty} \frac{\mu_{k+1}}{k^{1+p(\beta+1)}} \to C_3 \mbox{ as $\hat{s}_1 \to \infty$,}
\]
where $C_2$ and $C_3$ are positive constants depending on $p$ and $\beta$, but independent of $\mathbb{S}$. This completes the proof.
\end{proof}


For the next result we need (see e.g. Lemma 4.14 in Stromberg \cite{Stromberg94}):

\begin{lemma}[the Paley--Zygmund inequality]\label{lem:unif}
Let $Z$ be a non-negative random variable satisfying
$E[Z] >0$ and $E[Z^2] <\infty$.
Then, for $\theta \in (0,1)$,  we have
\begin{align*}
P(Z > \theta E[Z]) \ge (1-\theta)^2 \cdot \dfrac{ (E[Z])^2}{E[Z^2]}.
\end{align*}
\end{lemma}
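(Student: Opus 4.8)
The plan is to prove this classical inequality by a first-moment decomposition combined with the Cauchy--Schwarz inequality, so no auxiliary constructions are needed. First I would split the mean according to whether or not $Z$ exceeds the threshold $\theta E[Z]$, writing
\begin{align*}
E[Z] = E[Z\, \mathbf{1}(Z \le \theta E[Z])] + E[Z\, \mathbf{1}(Z > \theta E[Z])].
\end{align*}
On the event $\{Z \le \theta E[Z]\}$ the integrand is at most $\theta E[Z]$, so the first term is bounded above by $\theta E[Z] \cdot P(Z \le \theta E[Z]) \le \theta E[Z]$. Rearranging yields the lower bound
\begin{align*}
E[Z\, \mathbf{1}(Z > \theta E[Z])] \ge (1-\theta) E[Z],
\end{align*}
and here the hypothesis $E[Z] > 0$ makes the right-hand side strictly positive, which is what ultimately forces the probability to be positive.

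Next I would apply the Cauchy--Schwarz inequality to this same truncated expectation, viewing it as the pairing of $Z$ with the indicator of the event $\{Z > \theta E[Z]\}$. Since $\mathbf{1}(\cdot)^2 = \mathbf{1}(\cdot)$, this gives
\begin{align*}
E[Z\, \mathbf{1}(Z > \theta E[Z])] \le \sqrt{E[Z^2]} \cdot \sqrt{P(Z > \theta E[Z])},
\end{align*}
where the finiteness assumption $E[Z^2] < \infty$ guarantees the right-hand side is well defined. Combining the lower bound from the first step with this upper bound, squaring both sides (each is non-negative), and dividing through by $E[Z^2] > 0$ produces exactly the claimed inequality
\begin{align*}
P(Z > \theta E[Z]) \ge (1-\theta)^2 \cdot \dfrac{(E[Z])^2}{E[Z^2]}.
\end{align*}

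I do not anticipate any genuine obstacle here, since the result is elementary and the decomposition is the only idea required. The single point deserving minor care is the orientation of the truncation bound in the first step and the observation that the indicator is idempotent, so that the Cauchy--Schwarz factor $E[\mathbf{1}(Z > \theta E[Z])^2]$ collapses to the probability $P(Z > \theta E[Z])$ rather than to something requiring further estimation.
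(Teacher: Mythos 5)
Your proof is correct and follows essentially the same route as the paper's own (commented-out) argument: decompose $E[Z]$ over the events $\{Z \le \theta E[Z]\}$ and $\{Z > \theta E[Z]\}$, bound the first term by $\theta E[Z]$, and apply Cauchy--Schwarz with the idempotent indicator to the second. No differences worth noting.
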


\begin{lemma} \label{lem:Sigma2Ratio2} Let $p \in (0,1)$. For all $\mathbb{S}$ as in \eqref{def:Subset} satisfying \eqref{;condA}, we have
 \\
(i) For $\beta \in (-1,0)$, there is a positive constant $K=K(p,\beta)$ not depending on $\mathbb{S}$ such that
$P^{\mathbb{S}}(T_n \asymp n^{p(\beta+1)-\beta}) \ge 1/K$.
\\
(ii) Let $\hat{\mathbb{S}}$ be as in \eqref{:S_hat}. For $\beta \in [0,p/(1-p))$, there is a positive constant $K=K(p,\beta)$ not depending on $\mathbb{S}$ such that 
$P^{\hat{\mathbb{S}}}(T_n \asymp n^{p(\beta+1)-\beta} ) \ge 1/K$. 
\end{lemma}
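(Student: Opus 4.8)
The plan is to transfer the two-sided control of the weighted sum $\Xi_n$ furnished by Lemmas~\ref{thm:XiSigmaMeanGen} and~\ref{lem:Sigma2Ratio} to the unweighted count $T_n=\sum_{k\le n}Y_k$, exploiting that $\mu_k\sim k^\beta/\Gamma(\beta+1)$ is monotone. Throughout I work under $P^{\mathbb{S}}$ when $\beta\in(-1,0)$ and under $P^{\hat{\mathbb{S}}}$, with $\hat{\mathbb{S}}$ as in~\eqref{:S_hat}, when $\beta\in[0,p/(1-p))$, so that the second-moment ratio bound of Lemma~\ref{lem:Sigma2Ratio} holds with a constant $K=K(p,\beta)$ independent of $\mathbb{S}$; I abbreviate the corresponding expectation and probability by $E$ and $P$.

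First I would build a lower and an upper envelope for $\Xi$. For the lower envelope, the Paley--Zygmund inequality (Lemma~\ref{lem:unif}) applied to $Z=\Xi_n$ with $\theta=1/2$, combined with Lemma~\ref{lem:Sigma2Ratio}, gives
\[
P\bigl(\Xi_n \ge \tfrac12 E[\Xi_n]\bigr) \ge \tfrac14\cdot\frac{(E[\Xi_n])^2}{E[\Xi_n^2]} \ge \frac{1}{4K}.
\]
For the upper envelope, the identity $E[\Xi_{m+1}\mid\mathcal F_m]=(1+pw_m)\Xi_m$ together with~\eqref{;2g} shows that $\Xi_m/E[\Xi_m]$ is a non-negative martingale, which is bounded in $L^2$ by Lemma~\ref{lem:Sigma2Ratio}; Doob's $L^2$ maximal inequality then yields, for every $C>0$,
\[
P\bigl(\Xi_m \le C\,E[\Xi_m] \ \text{for all } m\le n\bigr) \ge 1-\frac{4K}{C^2}.
\]
Fixing $C$ so large that $4K/C^2<1/(8K)$ and intersecting the two events produces an event $\mathcal E$ with $P(\mathcal E)\ge 1/(8K)$ uniformly in $\mathbb{S}$, on which $\Xi_n\ge\frac12 E[\Xi_n]\asymp n^{p(\beta+1)}$ and $\Xi_m\le C\,E[\Xi_m]\asymp m^{p(\beta+1)}$ for all $m\le n$, by Lemma~\ref{thm:XiSigmaMeanGen}.

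Next I would convert these estimates into bounds on $T_n$ on $\mathcal E$, the sign of $\beta$ deciding which inequality is immediate. Because $\mu_k$ is monotone, for $\beta\ge0$ one has $\Xi_n\le\mu_n T_n$, so the lower envelope gives $T_n\ge\Xi_n/\mu_n\gtrsim n^{p(\beta+1)-\beta}$ at once, whereas for $\beta<0$ one has $\Xi_n\ge\mu_n T_n$, so the upper envelope gives $T_n\le\Xi_n/\mu_n\lesssim n^{p(\beta+1)-\beta}$ at once. For the matching upper bound when $\beta\ge0$ I would use Abel summation, $T_n=\Xi_n/\mu_n+\sum_{k}(\mu_k^{-1}-\mu_{k+1}^{-1})\Xi_k$ with non-negative increments $\mu_k^{-1}-\mu_{k+1}^{-1}\asymp k^{-\beta-1}$; inserting the upper envelope $\Xi_k\lesssim k^{p(\beta+1)}$ turns the series into $\asymp\sum_k k^{p(\beta+1)-\beta-1}\asymp n^{p(\beta+1)-\beta}$, which is of the right order precisely because $p(\beta+1)-\beta\in(0,1)$.

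The main obstacle is the lower bound on $T_n$ when $\beta\in(-1,0)$: there $\mu_k$ decreases, so a large $\Xi_n$ could a priori be produced by a few heavily weighted early terms, forcing $T_n$ to be small. To exclude this I would localize to the top block $(\delta n,n]$, on which the weights are $\asymp(\delta n)^\beta$, so that $T_n\ge(\Xi_n-\Xi_{\lfloor\delta n\rfloor})/\mu_{\lfloor\delta n\rfloor}$. On $\mathcal E$ the ratio $E[\Xi_{\delta n}]/E[\Xi_n]\asymp\delta^{p(\beta+1)}$, which follows from~\eqref{;0b}, lets me fix $\delta$ small enough that $\Xi_n-\Xi_{\lfloor\delta n\rfloor}\ge\frac12 E[\Xi_n]-C\,E[\Xi_{\delta n}]\ge\frac14 E[\Xi_n]\asymp n^{p(\beta+1)}$; dividing by $\mu_{\lfloor\delta n\rfloor}\asymp(\delta n)^\beta$ gives $T_n\gtrsim n^{p(\beta+1)-\beta}$. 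Here $\delta$ and all $\asymp$-constants may depend on $\mathbb{S}$, but the probability $P(\mathcal E)\ge1/(8K)$ does not, which is exactly the uniformity demanded by the statement. Combining the two directions, $T_n\asymp n^{p(\beta+1)-\beta}$ on $\mathcal E$, and the lemma follows with $K$ replaced by $8K$.
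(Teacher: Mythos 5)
Your core strategy --- Paley--Zygmund together with the second-moment bound of Lemma~\ref{lem:Sigma2Ratio} and the mean asymptotics of Lemma~\ref{thm:XiSigmaMeanGen} to make $\Xi_n$ of order $n^{p(\beta+1)}$ with probability at least $1/K$ uniformly in $\mathbb{S}$, followed by a deterministic conversion from $\Xi_n$ to $T_n$ --- is exactly the paper's; the paper merely compresses the conversion step into a citation of Lemma 4.2 of \cite{RoyTakeiTanemura24ECP}. Your way of carrying that step out (monotonicity of $\mu_k$, Abel summation with $\mu_k^{-1}-\mu_{k+1}^{-1}\asymp k^{-\beta-1}$, and the top-block trick for the lower bound when $\beta\in(-1,0)$) is correct, and your reading of the uniformity requirement --- the $\asymp$-constants may depend on $\mathbb{S}$, only the probability bound may not --- is the right one.

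There is, however, a genuine gap at the final step. By the paper's definition of $\asymp$, the event $\{T_n \asymp n^{p(\beta+1)-\beta}\}$ is a statement about the whole trajectory: there exist (random) constants $0<c\le C<\infty$ with $c\,n^{p(\beta+1)-\beta}\le T_n\le C\,n^{p(\beta+1)-\beta}$ for all large $n$; this is also what Section~\ref{sec:Postivity} actually uses, since the events $\Lambda_j^{(m)}$ there are defined through $\lim_{n\to\infty}$. Your event $\mathcal{E}$ depends on $n$: for each $n$ you produce a \emph{different} event of probability at least $1/(8K)$ on which the two-sided bound holds at that single time $n$, and ``the lemma follows'' is a non sequitur --- a priori the intersection of these events over $n$ could be null. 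The repair uses a tool you already introduced: $\Xi_m/E^{\mathbb{S}}[\Xi_m]$, $m\ge s_1$, is a non-negative martingale bounded in $L^2$, hence it converges a.s.\ and in $L^2$ to a limit $L$ with $E^{\mathbb{S}}[L]=1$ and $E^{\mathbb{S}}[L^2]\le K$. Apply Paley--Zygmund to $L$ (rather than to $\Xi_n$ at fixed $n$) to get $P^{\mathbb{S}}(L>1/2)\ge 1/(4K)$; on $\{L>1/2\}$ one has $\Xi_m\ge\tfrac14 E^{\mathbb{S}}[\Xi_m]$ for all sufficiently large $m$, and letting $n\to\infty$ in Doob's inequality bounds the upper envelope $\sup_{m}\Xi_m/E^{\mathbb{S}}[\Xi_m]$ on a single event. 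Intersecting gives one event of probability at least $1/(8K)$ on which both envelopes hold for all large $m$ simultaneously; your deterministic conversion then yields the trajectory event $\{T_n\asymp n^{p(\beta+1)-\beta}\}$, and the lemma follows as you intended.
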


\begin{proof} By Lemmas \ref{lem:unif} and \ref{lem:Sigma2Ratio}, 
$P^{\mathbb{S}}\left(\Xi_n>E^{\mathbb{S}}[\Xi_n]/2 \right) \ge 1/K$.
This together with Lemma \ref{thm:XiSigmaMeanGen} gives
$P^{\mathbb{S}}(\Xi_n \asymp n^{p(\beta+1)}) \ge 1/K$. 
By the same argument as in the case $\mathbb{S}=\mathbb{N}$ we obtain
the conclusion. (See Lemma 4.2 in \cite{RoyTakeiTanemura24ECP}.)
\end{proof}

\section{Positivity of the martingale limit}
\label{sec:Postivity} 
For $j,k \in \mathbb{N}$ with $j <k$ we write $j \Leftarrow k$ if $\beta_k=j$, 
and $j \leftarrow k$ if there is an increasing sequence $\{ \ell_i\}_{i=0}^p$ of $\mathbb{N}$
with $\ell_0=j, \ell_p=k$ such that $\ell_i \Leftarrow \ell_{i+1}$, $i=0,1,\dots, p-1$.
Let (see Figure \ref{fig:coupling}):
\begin{align}
&\eta^{(0)}= \{1\},
\\
&\eta^{(1)}= \{i \in \mathbb{N} : \beta_i=1\} =:\{Y_j^{(1)} \}_{j=1}^{\# \eta^{(1)}},
\\
&\eta^{(m)}= \{i \in \mathbb{N} : k \Leftarrow i \text{ for some $k\in\eta^{(m-1)}$} \}=:\{Y^{(m)}_j\}_{j=1}^{\# \eta^{(m)}}, m\ge 2,
\end{align}
where $Y^{(m)}_j < Y^{(m)}_{j+1}$, $j\in\mathbb{N}$.
We set 
$\eta^{(m)}_n= \eta^{(m)}\cap \{1,2,\dots,n\}$.
We introduce another process defined as 
\begin{align}
\zeta^{(m,j)}&= \{Y_j^{(m)}\}\cup\{ i : Y_j^{(m)} \leftarrow i \}, \quad j =1,2,\dots, \# \eta^{(m)}, \\
\zeta_n^{(m,j)}&= \zeta^{(m,j)} \cap \{1,2,\ldots,n\},
\end{align}
and set
\begin{align}
\Lambda_j ^{(m)} = \begin{cases}
\biggl\{ \displaystyle \lim_{n\to\infty}\frac{\#\zeta_n^{(m,j)} }{n^{p(\beta+1) -\beta}}=0\biggr\}, &\text{if $j \le \# \eta^{(m)}$},
\\
\Omega, &\text{otherwise.}
\end{cases}
\end{align}
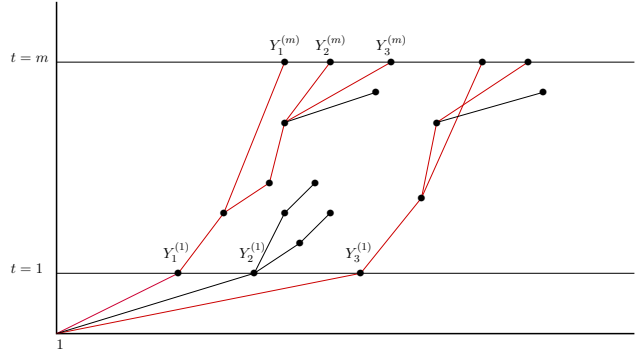
\begin{figure}
\begin{center}
\scalebox{.5} 
{
\begin{pspicture}(0,-4.6)(19.619728,4.6)
\definecolor{colour0}{rgb}{0.8,0.0,0.2}
\definecolor{colour1}{rgb}{0.8,0.0,0.0}
\psline[linecolor=black, linewidth=0.04](4.4197116,4.6)(4.4197116,-4.2)(19.61971,-4.2)
\psline[linecolor=black, linewidth=0.02](4.4197116,-2.6)(19.61971,-2.6)
\psline[linecolor=colour0, linewidth=0.02](4.4197116,-4.2)(7.6197114,-2.6)
\psline[linecolor=colour1, linewidth=0.02](4.4197116,-4.2)(12.419711,-2.6)
\psline[linecolor=black, linewidth=0.02](4.4197116,-4.2)(9.619712,-2.6)(9.619712,-2.6)
\rput[bl](7.2197113,-2.3){$Y_1^{(1)}$}
\rput[bl](9.219711,-2.3){$Y_2^{(1)}$}
\rput[bl](12.0197115,-2.3){$Y_3^{(1)}$}
\rput[bl](3.2197115,-2.6){$t=1$}
\psline[linecolor=black, linewidth=0.02](4.4197116,3.0)(19.61971,3.0)(19.61971,3.0)
\psline[linecolor=colour1, linewidth=0.02](7.6197114,-2.6)(8.819712,-1.0)(10.0197115,-0.2)(10.419711,1.4)(11.619712,3.0)
\psline[linecolor=colour1, linewidth=0.02](12.419711,-2.6)(14.0197115,-0.6)(14.419711,1.4)(16.819712,3.0)
\psline[linecolor=colour1, linewidth=0.02](15.619712,3.0)(14.0197115,-0.6)
\psline[linecolor=black, linewidth=0.02](9.619712,-2.6)(10.419711,-1.0)(11.219711,-0.2)
\psline[linecolor=black, linewidth=0.02](9.619712,-2.6)(10.819712,-1.8)(11.619712,-1.0)
\rput[bl](10.0197115,3.2){$Y_1^{(m)}$}
\psline[linecolor=colour1, linewidth=0.02](8.819712,-1.0)(10.419711,3.0)
\psline[linecolor=colour1, linewidth=0.02](10.419711,1.4)(10.419711,1.4)(10.419711,1.4)(13.219711,3.0)
\rput[bl](11.219711,3.2){$Y_2^{(m)}$}
\rput[bl](12.819712,3.2){$Y_3^{(m)}$}
\rput[bl](4.4197116,-4.6){1}
\rput[bl](3.2197115,3.0){$t=m$}
\psline[linecolor=black, linewidth=0.02](10.419711,1.4)(12.819712,2.2)
\psline[linecolor=black, linewidth=0.02](14.419711,1.4)(17.219711,2.2)
\psdots[linecolor=black, dotsize=0.04](0.019711532,-4.2)
\psdots[linecolor=black, dotsize=0.04](0.019711532,-4.2)
\psdots[linecolor=black, dotsize=0.04](0.019711532,-4.2)
\psdots[linecolor=black, dotsize=0.18](7.6197114,-2.6)
\psdots[linecolor=black, dotsize=0.18](8.819712,-1.0)
\psdots[linecolor=black, dotsize=0.18](10.0197115,-0.2)
\psdots[linecolor=black, dotsize=0.18](10.419711,1.4)
\psdots[linecolor=black, dotsize=0.18](12.819712,2.2)
\psdots[linecolor=black, dotsize=0.18](13.219711,3.0)
\psdots[linecolor=black, dotsize=0.18](11.619712,3.0)
\psdots[linecolor=black, dotsize=0.18](10.419711,3.0)
\psdots[linecolor=black, dotsize=0.18](11.219711,-0.2)
\psdots[linecolor=black, dotsize=0.18](10.419711,-1.0)
\psdots[linecolor=black, dotsize=0.18](11.619712,-1.0)
\psdots[linecolor=black, dotsize=0.18](10.819712,-1.8)
\psdots[linecolor=black, dotsize=0.18](9.619712,-2.6)
\psdots[linecolor=black, dotsize=0.18](12.419711,-2.6)
\psdots[linecolor=black, dotsize=0.18](14.0197115,-0.6)
\psdots[linecolor=black, dotsize=0.18](14.419711,1.4)
\psdots[linecolor=black, dotsize=0.18](17.219711,2.2)
\psdots[linecolor=black, dotsize=0.18](15.619712,3.0)
\psdots[linecolor=black, dotsize=0.18](16.819712,3.0)
\end{pspicture}
}
\caption{$\eta^{(m)}$ denotes all the integer points on the line $\{t=m\}$ which are eventually connected to the vertex $1$ on the $x$-axis via integer points on the levels $\{1 \leq t \leq m-1\}$. 
The black lines denote those which do not have any connection from $\{t \geq m\}$.
}   \label{fig:coupling}
    \end{center}
\end{figure}

We put $\xi = \{k\in\mathbb{N}: X_k=1\}$ and $\xi_n:=\xi \cap \{1,2,\dots,n\}$.
Then we have
\begin{align}
\xi = \eta^{(0)} \cup \eta^{(1)}\cup \Biggl\{ \bigcup_{j=1}^\infty \zeta^{(1,j)} \Biggr\}.
\end{align}
We note that $S_n = \# \xi_n$,
and
$\# \eta^{(1)} 
\begin{cases} =\infty &\text{a.s. if $\beta \in (-1, 0]$}
\\
<\infty &\text{a.s. if $\beta \in (0, \infty)$. }
\end{cases}$

First we consider the case $\beta\in (-1,0)$. 
Because $\displaystyle \lim_{n\to\infty}\frac{S_n}{n^{p(\beta+1) -\beta}}$ exists, 
\begin{align}
P\biggl( \lim_{n\to\infty}\frac{S_n}{n^{p(\beta+1) -\beta}} =0\biggr) 
&\le P\Biggl( \bigcap_{j=1}^\infty  \Lambda_j^{(1)} \Biggr)
\label{;3_8}
= P\left( \Lambda_1^{(1)} \right)\prod_{j=1}^\infty P\Biggl(\Lambda_{j+1}^{(1)} \,\left|\,  \bigcap_{\ell=1}^j  \Lambda_\ell^{(1)} \right. \Biggr).
\end{align}
Note that $k\notin \bigcup_{\ell=1}^j \zeta^{(1,\ell)}$ means $\beta_{k} \notin \bigcup_{\ell=1}^j \zeta^{(1,\ell)}$.
Then, for any $\mathbb{S}\subset \mathbb{N}$, $\beta_k$  under the conditional probability 
$P\left( \cdot \,\left|\,   \mathbb{N}\setminus \bigcup_{\ell=1}^j \zeta^{(1,\ell)} =  \mathbb{S}\right. \right) $
stochastically dominates  $\tilde{\beta}_k$ in \eqref{;beta_t} under $P^{\mathbb{S}}$ for any $k\in \mathbb{S}$.
Since $\beta_k, k\in\mathbb{S}$ under the conditional probability are independent, for any $A_i \subset \mathbb{S}$ and $\{ k_i\}_{i=1}^m  \in \mathbb{S}$, $m \in \mathbb{N}$, 
\begin{align}\label{;compa}
P\left(\beta_{k_i} \in A_i, \  1\le i \le m   \,\left|\,  
 \mathbb{N}\setminus\bigcup_{\ell=1}^j  \zeta^{(1,\ell)} =  \mathbb{S}\right. \right) 
\ge P^{\mathbb{S}}(\tilde{\beta}_{k_i} \in A_i, \  1\le i \le m ).
\end{align}
On the event $\bigcap_{\ell=1}^j \Lambda^{(1,\ell)}$,
$\mathbb{S}=\mathbb{N}\setminus \bigcup_{\ell=1}^j \zeta^{(1,\ell)}$ satisfies the condition \eqref{;condA}.
Then from Lemma \ref{lem:Sigma2Ratio2} (i) and \eqref{;compa},
\begin{align}\label{:225}
P\left(\Lambda_{j+1}^{(1)} \,\left| \,  \bigcap_{\ell=1}^j  \Lambda_\ell^{(1)}\right. \right) \le 1-\dfrac{1}{K},
\end{align}
and by \eqref{;3_8} we have 
$\displaystyle P\Bigl( \lim_{n\to\infty}\frac{S_n }{n^{p(\beta+1) -\beta}}=0\Bigr) =0$.

Next we consider the case $\beta\in [0, p/(1-p))$. In this case $P(S_\infty = \infty)>0$. 
Consider the conditional probability $P_\infty :=P( \,\cdot\, \mid S_\infty = \infty)$.
Note that 
\begin{align}\label{:key_b>0}
P\Bigl(\Bigl\{ \displaystyle \lim_{n\to\infty}\frac{S_n}{n^{p(\beta+1) -\beta}} =0\Bigr\} \cap \{ S_\infty =\infty \} \Bigr)=0
\end{align}
implies $\displaystyle P_\infty \Bigl(\lim_{n\to\infty}\frac{S_n}{n^{p(\beta+1) -\beta}} =0 \Bigr)=0$.

We first describe an algorithm (see Figure \ref{fig2}) to obtain a particular sequence of increasing integers $\{Y_{j_k}^{(m_k)}\}_{k \ge 0}$ such that $Y_{j_k}^{(m_k)} \leftarrow 1$ for $k \ge 0$ to use  Lemma \ref{lem:Sigma2Ratio2} (ii). \\
{\bf Step 0:} Fix $m_0 \in \mathbb{N}$, and take $\mathbb{S}_0 = \mathbb{N}$, and let $Y_{j_0}^{(m_0)} = Y_1^{(m_0)}$. \\
{\bf Step 1:} Take $\mathbb{S}_1 =  \mathbb{N} \setminus  \zeta^{(m_0,1)}$ and $N(1)=N_0(\mathbb{S}_1)$, where $N_0(\mathbb{S}_1)$ for this $\mathbb{S}_1$  is defined in \eqref{;condA}. Let
\begin{align*}
\tau_1^{(m_0)} := \inf \{ \ell > 1: \zeta^{(m_0,\ell)} \cap [N(1),\infty) \},
\end{align*}
where $\inf \emptyset = +\infty$. For $\tau_1^{(m_0)}<+\infty$, define 
\begin{align*}
Y_{j_1}^{(m_1)} = \min \{ \zeta^{(m_1,\tau_1^{(m_0)})} \cap [N(1),\infty)  \}.
\end{align*}
We stop the algorithm when $\tau_1^{(m_0)}=+\infty$. \\
{\bf Step $\pmb{k}$:} Take $\mathbb{S}_k =  \mathbb{N} \setminus  \zeta^{(m_0,\tau_{k-1}^{(m_0)})}$ and $N(k)=N_0(\mathbb{S}_k)$. Let
\begin{align*}
\tau_k^{(m_0)} := \inf \{ \ell > \tau_{k-1}^{(m_0)} : \zeta^{(m_0,\ell)} \cap [N(k),\infty) \},
\end{align*}
where $\inf \emptyset = +\infty$. For $\tau_k^{(m_0)}<+\infty$, define 
\begin{align*}
Y_{j_k}^{(m_k)} = \min \{ \zeta^{(m_0,\tau_{k-1}^{(m_0)})} \cap [N(k),\infty)  \}.
\end{align*}
We stop the algorithm when $\tau_k^{(m_0)}=+\infty$.

For $m \in \mathbb{N}$, we let
\begin{align}
\mathcal{C}^{(m)} = \{j \in \mathbb{N} : \# \zeta^{(m,j)}=\infty\}. 
\end{align}
Because $P_\infty( \# \mathcal{C}^{(m)} \le M, \ \forall m \in\mathbb{N})=0$ for any $M\in\mathbb{N}$,
\begin{align}\label{Cminfty}
P_\infty \left(\lim_{m\to\infty} \# \mathcal{C}^{(m)} = \infty\right)=1.
\end{align}
Note that  $\# \mathcal{C}^{(m)} \ge k+1$ implies $\tau_{k}^{(m)} < \infty$.

To apply the algorithm, take $m_0 \in \mathbb{N}$. 
Then from Lemma \ref{lem:Sigma2Ratio2} (ii) with $\hat{\mathbb{S}} = \hat{\mathbb{S}}_0 = \{k \in \mathbb{N} : k \ge Y_1^{(m_0)} \}$,
\begin{align}
P\left(\Lambda_{1}^{(m_0)} \right) \le 1-\dfrac{1}{K}.
\label{eq:3.15}
\end{align}
Take $\hat{\mathbb{S}}_1 = \mathbb{S}_1 \cap [Y_{j_1}^{(m_1)},\infty)$. On the event $\Lambda_{1}^{(m_0)}$, $\mathbb{S}_1$ satisfies \eqref{;condA} with $N_0=N(1)$. Applying Lemma \ref{lem:Sigma2Ratio2} (ii) for $\hat{\mathbb{S}}=\hat{\mathbb{S}}_1$,
\begin{align}
P\left(   \left. \Lambda_{j_1}^{(m_1)}   \,\right | \,  \Lambda_1^{(m_0)}, \  \tau_1^{(m_0)} <\infty \right) \le 1-\dfrac{1}{K}.
\label{eq:3.16}
\end{align}
Combining \eqref{eq:3.15} and \eqref{eq:3.16}, we obtain
\begin{align}
P\left(      \Lambda_1^{(m_0)} \cap  \Lambda_{j_1}^{(m_1)}, \  \tau_1^{(m_0)} <\infty   \right) \le \left(1-\dfrac{1}{K}\right)^2.
\end{align}
In view of \eqref{Cminfty}, iterating this procedure, we have
\begin{align}
P\Biggl( \bigcap_{p=0}^{\ell-1}  \Lambda_{j_p}^{(m_p)}, \ \tau_{\ell-1}^{(m_0)} <\infty   \Biggr) \le \left(1-\dfrac{1}{K}\right)^\ell \quad \mbox{for $\ell \in \mathbb{N}$}. 
\label{eq:3.14}
\end{align}
Hence
\begin{align}\notag
&P\left(\left\{ \displaystyle \lim_{n\to\infty}\frac{S_n}{n^{p(\beta+1) -\beta}} =0\right\} \cap \{ S_\infty =\infty \} \right)
\\\notag
&\le P\Biggl( \bigcap_{p=0}^{\ell-1}  \Lambda_{j_p}^{(m_p)}, \ \tau_{\ell-1}^{(m_0)} <\infty   \Biggr)
+ P(\tau_{\ell-1}^{(m_0)}=\infty, \ S_\infty =\infty )
\\
&\le \left(1-\dfrac{1}{K}\right)^\ell + P_{\infty} (\# \mathcal{C}^{(m_0)} < \ell),
\end{align}
where we used $\{ \tau_{\ell-1}^{(m_0)} = \infty \} \subset \{ \# \mathcal{C}^{(m_0)} < \ell \}$.
For any $\varepsilon >0$ we take $\ell \in \mathbb{N}$ such that $\{1-(1/K)\}^{\ell} < \varepsilon/2$, 
and then
we can take $m_0 \in \mathbb{N}$ satisfying $ P_{\infty} (\# \mathcal{C}^{(m_0)} < \ell)< \varepsilon/2$ by \eqref{Cminfty}. Thus, we have \eqref{:key_b>0}.
This completes the proof of our assertions of first two rows given in Table \ref{table2}.
\qed

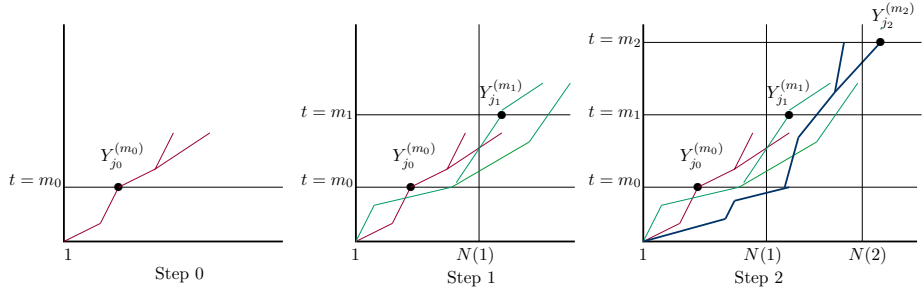
\begin{figure}
\begin{center}
\scalebox{.6} 
{
\begin{pspicture}(0,-3.105)(20.0,3.105)
\definecolor{colour3}{rgb}{0.0,0.6,0.6}
\definecolor{colour4}{rgb}{0.6,0.0,0.2}
\definecolor{colour5}{rgb}{0.0,0.6,0.2}
\definecolor{colour2}{rgb}{0.0,0.6,0.4}
\definecolor{colour6}{rgb}{0.0,0.2,0.4}
\psline[linecolor=colour3, linewidth=0.04](19.1,2.595)(19.1,2.595)
\psline[linecolor=black, linewidth=0.04](1.2,2.695)(1.2,-2.105)(6.0,-2.105)(6.0,-2.105)
\rput[bl](12.7,2.195){$t=m_2$}
\rput[bl](0.0,-0.905){$t=m_0$}
\psline[linecolor=black, linewidth=0.02](1.2,-0.905)(6.0,-0.905)
\psline[linecolor=colour4, linewidth=0.02](1.2,-2.105)(2.0,-1.705)
\psline[linecolor=colour4, linewidth=0.02](2.4,-0.905)(2.0,-1.705)
\psdots[linecolor=black, dotsize=0.18](2.4,-0.905)
\psline[linecolor=colour4, linewidth=0.02](2.4,-0.905)(3.2,-0.505)
\psline[linecolor=colour4, linewidth=0.02](3.6,0.295)(3.2,-0.505)(4.4,0.295)
\rput[bl](18.9,2.595){$Y^{(m_2)}_{j_2}$}
\rput[bl](2.0,-0.505){$Y^{(m_0)}_{j_0}$}
\rput[bl](3.2,-3.005){Step 0}
\rput[bl](1.2,-2.505){$1$}
\psline[linecolor=black, linewidth=0.04](7.6,2.695)(7.6,-2.105)(12.4,-2.105)(12.4,-2.105)
\rput[bl](6.4,-0.905){$t=m_0$}
\psline[linecolor=black, linewidth=0.02](7.6,-0.905)(12.4,-0.905)
\psline[linecolor=colour4, linewidth=0.02](7.6,-2.105)(8.4,-1.705)
\psline[linecolor=colour4, linewidth=0.02](8.8,-0.905)(8.4,-1.705)
\psdots[linecolor=black, dotsize=0.18](8.8,-0.905)
\psline[linecolor=colour4, linewidth=0.02](8.8,-0.905)(9.6,-0.505)
\psline[linecolor=colour4, linewidth=0.02](10.0,0.295)(9.6,-0.505)(10.8,0.295)
\rput[bl](8.4,-0.505){$Y^{(m_0)}_{j_0}$}
\rput[bl](9.8,-2.605){$N(1)$}
\rput[bl](7.5,-2.505){$1$}
\rput[bl](18.4,-2.605){$N(2)$}
\psline[linecolor=black, linewidth=0.02](10.3,2.695)(10.3,-2.105)
\psline[linecolor=colour5, linewidth=0.02](9.7,-0.905)(11.4,0.095)
\psline[linecolor=black, linewidth=0.02](7.6,0.695)(12.3,0.695)
\psline[linecolor=colour2, linewidth=0.02](7.6,-2.105)(8.0,-1.305)
\psline[linecolor=colour2, linewidth=0.02](8.0,-1.305)(9.7,-0.905)(9.7,-0.905)
\psline[linecolor=colour2, linewidth=0.02](9.8,-0.805)(10.8,0.695)
\psdots[linecolor=black, dotsize=0.18](10.8,0.695)
\psline[linecolor=colour2, linewidth=0.02](10.8,0.795)(11.7,1.395)
\psline[linecolor=colour2, linewidth=0.02](11.4,0.095)(12.3,1.395)
\rput[bl](10.3,0.895){$Y^{(m_1)}_{j_1}$}
\rput[bl](9.6,-3.105){Step 1}
\rput[bl](6.4,0.595){$t=m_1$}
\psline[linecolor=black, linewidth=0.04](13.9,2.695)(13.9,-2.105)(18.7,-2.105)(18.7,-2.105)
\rput[bl](12.7,-0.905){$t=m_0$}
\psline[linecolor=black, linewidth=0.02](13.9,-0.905)(18.7,-0.905)
\psline[linecolor=colour4, linewidth=0.02](13.9,-2.105)(14.7,-1.705)
\psline[linecolor=colour4, linewidth=0.02](15.1,-0.905)(14.7,-1.705)
\psdots[linecolor=black, dotsize=0.18](15.1,-0.905)
\psline[linecolor=colour4, linewidth=0.02](15.1,-0.905)(15.9,-0.505)
\psline[linecolor=colour4, linewidth=0.02](16.3,0.295)(15.9,-0.505)(17.1,0.295)
\rput[bl](14.7,-0.505){$Y^{(m_0)}_{j_0}$}
\rput[bl](16.1,-2.605){$N(1)$}
\rput[bl](13.8,-2.505){$1$}
\psline[linecolor=black, linewidth=0.02](16.6,2.695)(16.6,-2.105)
\psline[linecolor=colour5, linewidth=0.02](16.0,-0.905)(17.7,0.095)
\psline[linecolor=black, linewidth=0.02](13.9,0.695)(18.6,0.695)
\psline[linecolor=colour2, linewidth=0.02](13.9,-2.105)(14.3,-1.305)
\psline[linecolor=colour2, linewidth=0.02](14.3,-1.305)(16.0,-0.905)(16.0,-0.905)
\psline[linecolor=colour2, linewidth=0.02](16.1,-0.805)(17.1,0.695)
\psdots[linecolor=black, dotsize=0.18](17.1,0.695)
\psline[linecolor=colour2, linewidth=0.02](17.1,0.795)(18.0,1.395)
\psline[linecolor=colour2, linewidth=0.02](17.7,0.095)(18.6,1.395)
\rput[bl](16.6,0.895){$Y^{(m_1)}_{j_1}$}
\rput[bl](15.9,-3.105){Step 2}
\rput[bl](12.7,0.595){$t=m_1$}
\psline[linecolor=black, linewidth=0.02](13.9,2.295)(20.0,2.295)
\psline[linecolor=black, linewidth=0.02](18.6,0.695)(20.0,0.695)
\psline[linecolor=black, linewidth=0.02](18.7,-0.905)(19.9,-0.905)
\psline[linecolor=black, linewidth=0.04](18.7,-2.105)(19.9,-2.105)
\psline[linecolor=colour6, linewidth=0.04](13.9,-2.105)(15.7,-1.605)
\psline[linecolor=colour6, linewidth=0.04](15.7,-1.605)(15.7,-1.605)(15.7,-1.605)(15.9,-1.205)
\psline[linecolor=colour6, linewidth=0.04](15.9,-1.205)(17.1,-0.905)
\psline[linecolor=colour6, linewidth=0.04](17.0,-0.905)(17.0,-0.905)(17.3,0.195)
\psline[linecolor=colour6, linewidth=0.04](17.3,0.195)(18.1,1.195)
\psline[linecolor=colour6, linewidth=0.04](18.1,1.195)(18.3,2.295)(18.3,2.295)
\psline[linecolor=colour6, linewidth=0.04](18.1,1.195)(19.1,2.295)
\psdots[linecolor=black, dotsize=0.18](19.1,2.295)
\psline[linecolor=black, linewidth=0.02](18.7,2.695)(18.7,-2.105)
\end{pspicture}
}
\caption{The first three steps of the algorithm.}
    \label{fig2}
    \end{center}
\end{figure}


\section{Central limit theorems} 
\label{sec:CLT}

To prove Theorem \ref{:Theorem 3.1}  we first rewrite $S_n$ as follows:
\begin{align*}
S_n 
= \sum_{k=1}^n \dfrac{\Sigma_k-\Sigma_{k-1}}{\mu_k} 
= \dfrac{\Sigma_n}{\mu_n} + \sum_{k=1}^{n-1} \left( \dfrac{1}{\mu_k}- \dfrac{1}{\mu_{k+1}}\right) \Sigma_k
=\dfrac{\Sigma_n}{\mu_n} + \sum_{k=1}^{n-1}\dfrac{\beta}{k\cdot \mu_{k+1}} \cdot \Sigma_k.
\end{align*}
Noting that $E[X_{k+1} \mid \mathcal{F}_k]= \dfrac{p(\beta+1)}{k \cdot \mu_{k+1}}\Sigma_k$ for $k \ge 1$,  we have
\begin{align*}
S_n &= \dfrac{\Sigma_n}{\mu_n} + \dfrac{\beta}{p(\beta+1)}  \sum_{k=1}^{n-1} E[X_{k+1} \mid \mathcal{F}_k] = \dfrac{\Sigma_n}{\mu_n} + \dfrac{\beta}{p(\beta+1)} \sum_{j=1}^n (X_j-d_j) \\
&= \dfrac{\Sigma_n}{\mu_n} + \dfrac{\beta}{p(\beta+1)}  S_n -  \dfrac{\beta}{p(\beta+1)}  \sum_{j=1}^n d_j,
\end{align*}
where we put $d_1:=X_1=1$ and $d_j := X_j -E[X_j \mid \mathcal{F}_{j-1}]$ for $j \ge 2$. Using \eqref{def:MnMart} and \eqref{eq:DefCbeta}, and recalling that $\mu_n=c_n(\beta)$, we have
\begin{align*}
S_n = C(p,\beta) \cdot \dfrac{\Gamma(n+p(\beta+1))}{\Gamma(n+\beta)} \cdot M_n -  \dfrac{\beta}{p(\beta+1)-\beta}  \sum_{j=1}^n d_j.
\end{align*}
Let $\hat{d}_k=M_k-M_{k-1}$. Note that $\widehat{d}_1=1$, and for $k \ge 2$,
\begin{align*}
\hat{d}_k
& = \frac{\Sigma_k- (1+\frac{p(\beta+1)}{k-1})\Sigma_{k-1} }{c_k(p(\beta+1))} 
= \frac{\mu_kX_k- E[\mu_k X_k| \mathcal{F}_{k-1}]}{c_k(p(\beta+1))}
= \frac{c_k(\beta)}{c_k(p(\beta+1))}d_k.
\end{align*}
Now we look at 
\begin{align*}
\dfrac{S_n - C(p,\beta)M_{\infty}n^{p(\beta+1)-\beta}}{\sqrt{n^{p(\beta+1)-\beta}}} =  -\sum_{k=1}^{\infty} X_{n,k} + R_n,
\end{align*}
where $(X_{n,k})_{k \ge 1,\,n \ge 1}$ is a square integrable martingale difference array defined by
\begin{align*}
X_{n,k}:= \begin{cases}
\displaystyle{
\frac{\beta}{ (p(\beta+1)-\beta) \sqrt{ n^{ p(\beta+1)-\beta} } }  d_k,
}
& 1 \le k \le n
\\
\displaystyle{
\frac{C(p,\beta)}{\sqrt{ n^{p(\beta+1)-\beta} }}\frac{\Gamma(n+p(\beta+1))}{\Gamma (n+\beta)}\hat{d}_k,
}
&k \ge n+1, 
\end{cases}
\end{align*}
and 
$R_n := \dfrac{C(p,\beta)}{\sqrt{ n^{p(\beta+1)-\beta} }} \cdot \left\{ \dfrac{\Gamma(n+p(\beta+1))}{\Gamma(n+\beta)} -n^{p(\beta+1)-\beta} \right\} \cdot M_{\infty}$.

Let $x>0$. Wendel's inequality \cite{Wendel48} implies that
\begin{align}\label{Est:1}
x^{\alpha}+ \alpha(\alpha-1) x^{\alpha-1}
\leq \dfrac{\Gamma(x+\alpha)}{ \Gamma(x)} \leq x^{\alpha}
\quad \mbox{for $\alpha \in [0,1]$. }
\end{align}
As for $\alpha>1$, letting $\{\alpha\}$ denote the fractional part of $\alpha$ and $k=\alpha-\{\alpha\}$, we have 
\begin{align} \label{Est:2pre}
x^k \cdot \frac{\Gamma(x+\{ \alpha \})}{\Gamma (x)} \le \frac{\Gamma(x+\alpha)}{\Gamma (x)} 
\le (x+\alpha-1)^k \cdot \frac{\Gamma(x+\{\alpha\})}{\Gamma (x)}.
\end{align} 
This together with \eqref{Est:1} implies that
\begin{align} \label{Est:2}
x^{\alpha} + \{\alpha\} (\{ \alpha \}-1) x^{\alpha-1} \le \dfrac{\Gamma(x+\alpha)}{ \Gamma(x)} \le (x+\alpha-1)^{\alpha} 
\quad \mbox{for $\alpha >1$. }
\end{align}
By \eqref{Est:1} and \eqref{Est:2}, we can see that
$\dfrac{\Gamma(n+p(\beta+1))}{\Gamma(n+\beta)} -n^{p(\beta+1)-\beta} = \mathcal{O} (n^{p(\beta+1)-\beta-1})$.
Thus $R_n \to 0$ as $n \to \infty$ a.s.

For random variables $(Z_n)_{n\in\mathbb{N}}$ and $Z$ defined on a probability space $(\Omega, \mathcal{H}, P)$
and $\mathcal{G}$ is a sub-$\sigma$-field of $\mathcal{H}$,  
we say that $(Z_n)_{n\in\mathbb{N}}$ converges $\mathcal{G}$-stably to $Z$ as $n\to\infty$,
written as
$Z_n \to Z$ $\mathcal{G}$-stably as $n\to\infty$, 
if $Z_n \to Z$ in distribution under $P(\,\cdot\, \mid F)$ for every $F\in \mathcal{G}$ with $P(F)>0$.
Then Theorem \ref{:Theorem 3.1} is derived by applying the following martingale CLT, which is Exercise 6.2 based on Theorem 6.1 in H\"{a}usler and Luschgy \cite{HauslerLuschgy15}, p.86, with $\mathcal{G}_{n,k} = \mathcal{F}_k$.

\begin{theoremN}[H\"{a}usler and Luschgy] \label{thm:HauslerLuschgy15Thm6.1} 
Let $(X_{n.k})_{0 \le k < \infty, n \in \mathbb{N}}$ be a square-integrable martingale difference array adapted to the nested array $(\mathcal{G}_{n,k})_{0 \le k < \infty, n \in \mathbb{N}}$.
We assume that
\[ \sum_{k=1}^{\infty} X_{n,k}
\mbox{ converges a.s. and }  
\sum_{k=1}^{\infty} E[X_{n,k}^2 \mid \mathcal{G}_{n,k-1}]<+\infty
\mbox{ a.s. for each $n$.} \]
Let $\mathcal{G} =\sigma\left( \bigcup_{n=1}^\infty \bigcup_{k=1}^{\infty} \mathcal{G}_{n,k} \right)$.
Assume that
\begin{align}
\label{:N} 
&\sum_{k=1}^{\infty} E[X_{n,k}^2 \mid \mathcal{G}_{n, k-1}] \to \eta^2
\quad \text{in probability as $n\to\infty$}
\intertext{for some $\mathcal{G}$-measurable real random variable $\eta\ge 0$, and}
\label{:CLB}
&\sum_{k=1}^{\infty} E[X_{n,k}^2 \mathbf{1}_{ \{ |X_{n,k}| \ge \varepsilon \} } \mid \mathcal{G}_{n, k-1}] \to 0
\quad \text{in probability as $n\to\infty$,}
\end{align}
 for every $\varepsilon  >0$. 
Then we have 
$\sum_{k=1}^{\infty} X_{n,k} \to \eta \cdot N$ $\mathcal{G} $-stably as $n \to \infty$, 
where $N$ is independent of $\mathcal{G}$ and $N \stackrel{d}{=} N(0,1)$. If $P(\eta>0)>0$ in addition, then
\begin{align*}
\bigg(   \sum_{k=1}^{\infty} E[ X_{n,k}^2 \mid \mathcal{G}_{n,k-1}] \bigg)^{-1/2} \sum_{k=1}^{\infty} X_{n,k} \to N  \quad \mbox{ $\mathcal{G}$-stably under $P(\,\cdot \, \mid \eta>0)$ as $n\to\infty$.}
\end{align*}
\end{theoremN}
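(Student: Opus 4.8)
The plan is to establish the stable CLT through convergence of conditional characteristic functions, by means of the multiplicative (exponential) martingale that is classical for martingale central limit theorems. Writing $S_n := \sum_{k=1}^\infty X_{n,k}$, the definition of $\mathcal{G}$-stable convergence recalled above is equivalent, after a routine approximation, to
\[
E\!\left[ Y \, e^{itS_n} \right] \longrightarrow E\!\left[ Y \, e^{-t^2\eta^2/2} \right] \qquad (n\to\infty)
\]
for every $t\in\mathbb{R}$ and every bounded $\mathcal{G}$-measurable $Y$, the right-hand side being the conditional characteristic function of $\eta\cdot N$ given $\mathcal{G}$. Since $\bigcup_{n,k}\mathcal{G}_{n,k}$ is a field generating $\mathcal{G}$ and the array is nested, I would first reduce to $Y$ that is $\mathcal{G}_{n_1,m_1}$-measurable for some fixed $(n_1,m_1)$; for $n\ge n_1$ such a $Y$ is then $\mathcal{G}_{n,m_1}$-measurable, which is what makes the martingale structure usable from index $m_1$ on. The factor $e^{-t^2\eta^2/2}$ will likewise be approximated in $L^1$ by a $\mathcal{G}_{n_1,m_1}$-measurable $\psi$ (enlarging the indices if needed), since $\eta$ is only $\mathcal{G}$-measurable.

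The central device is the increment characteristic function $\varphi_{n,k}:=E[e^{itX_{n,k}}\mid\mathcal{G}_{n,k-1}]$ and the process $U_{n,m}:=e^{itS_{n,m}}\big/\prod_{k=1}^m\varphi_{n,k}$, where $S_{n,m}=\sum_{k\le m}X_{n,k}$; this is a complex martingale in $m$ with $U_{n,0}=1$, which I would render bounded by the usual localization (stopping the first time $\prod_{k\le m}|\varphi_{n,k}|$ drops below a threshold, or $V_{n,m}:=\sum_{k\le m}E[X_{n,k}^2\mid\mathcal{G}_{n,k-1}]$ exceeds a level, and letting the level grow at the end; write $V_n:=V_{n,\infty}$). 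Boundedness makes the martingale closed, so $E[U_{n,\infty}\mid\mathcal{G}_{n,m_1}]=U_{n,m_1}$, and one has the factorization $e^{itS_n}=U_{n,\infty}\prod_{k=1}^\infty\varphi_{n,k}$.

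The proof then rests on two ingredients. First, $\prod_{k=1}^\infty\varphi_{n,k}\to e^{-t^2\eta^2/2}$ in probability: the conditional Lindeberg condition \eqref{:CLB} yields uniform asymptotic negligibility ($\max_k|X_{n,k}|\to0$ in probability) and uniform integrability of $\{X_{n,k}^2\}$, whence $\log\varphi_{n,k}=-\tfrac{t^2}{2}E[X_{n,k}^2\mid\mathcal{G}_{n,k-1}]+r_{n,k}$ with $\sum_k|r_{n,k}|\to0$, so that $\sum_k\log\varphi_{n,k}\to-\tfrac{t^2}{2}\eta^2$ by \eqref{:N}. Second, for $\mathcal{G}_{n_1,m_1}$-measurable $Z$ and $n\ge n_1$, the closure identity gives $E[ZU_{n,\infty}]=E[ZU_{n,m_1}]\to E[Z]$, because the finitely many leading increments satisfy $S_{n,m_1}\to0$ and $\varphi_{n,k}\to1$ in probability, hence $U_{n,m_1}\to1$. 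Writing $e^{itS_n}=U_{n,\infty}\psi+U_{n,\infty}\big(\prod_k\varphi_{n,k}-\psi\big)$, the expectation of the first term against $Y$ tends to $E[Y\psi]$ by the closure identity with $Z=Y\psi$, while the second is bounded by $C\|Y\|_\infty\,E\big|\prod_k\varphi_{n,k}-\psi\big|$ using $|U_{n,\infty}|\le C$; letting $\psi\to e^{-t^2\eta^2/2}$ in $L^1$ gives the displayed limit and hence $S_n\to\eta\cdot N$ $\mathcal{G}$-stably. For the normalized assertion, under $P(\cdot\mid\eta>0)$ one has $V_n\to\eta^2$ in probability by \eqref{:N}, so $V_n^{-1/2}\to\eta^{-1}$, a $\mathcal{G}$-measurable limit; the Slutsky-type stability of stable convergence (invariance under multiplication by a sequence converging in probability to a $\mathcal{G}$-measurable random variable) then yields $V_n^{-1/2}S_n\to\eta^{-1}\cdot\eta N=N$ stably.

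I expect the main obstacle to be the joint handling of non-adaptedness and $\mathcal{G}$-measurability: a naive term-by-term (telescoping) comparison of $e^{itX_{n,k}}$ with a Gaussian surrogate is obstructed because the \emph{future-variance} factor is not adapted, and this is exactly why the multiplicative martingale $U_{n,m}$ together with the closure identity $E[U_{n,\infty}\mid\mathcal{G}_{n,m_1}]=U_{n,m_1}$ is used instead; marrying this with the fact that $Y$ and $\eta$ live on the full limit field $\mathcal{G}$ forces the reduction to $\mathcal{G}_{n_1,m_1}$-measurable test functions and the $L^1$-approximation of $e^{-t^2\eta^2/2}$, while verifying the product convergence and $U_{n,m_1}\to1$ uses the full strength of \eqref{:N} and \eqref{:CLB} through uniform negligibility and uniform integrability. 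The passage to the normalized statement, by contrast, is comparatively routine.
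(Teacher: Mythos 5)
The paper contains no proof of this statement: it is imported as a known result, cited as Exercise 6.2 based on Theorem 6.1 of H\"{a}usler and Luschgy (2015), p.~86, so there is no internal argument to compare against. Your outline is, in substance, the standard proof from that reference: testing conditional characteristic functionals against bounded $\mathcal{G}$-measurable $Y$, using nestedness to reduce to $\mathcal{G}_{n_1,m_1}$-measurable test variables, the localized multiplicative martingale $U_{n,m}$ with its closure identity, the product convergence $\prod_k \varphi_{n,k} \to e^{-t^2\eta^2/2}$ from \eqref{:N} and \eqref{:CLB}, and a Slutsky-type stability argument for the normalized assertion; at the level of detail given it is sound. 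The one step you should make explicit is the passage from the conditional Lindeberg condition to unconditional uniform negligibility, i.e.\ $\max_k |X_{n,k}| \to 0$ in probability: since uniform integrability of the conditional Lindeberg sums is not part of the hypotheses, you cannot simply take expectations, and a Lenglart-type domination of the counting process $\sum_k \mathbf{1}_{\{|X_{n,k}| \ge \varepsilon\}}$ by its compensator is the correct (and standard) device — your phrase ``uniform integrability of $\{X_{n,k}^2\}$'' is neither available nor needed, and should be replaced by that argument.
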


 \medskip
Recalling that
$E[X_{n+1} \mid \mathcal{F}_n] = \dfrac{p(\beta+1)}{n\mu_{n+1}} \cdot \Sigma_n = \dfrac{p(\beta+1) c_n(p(\beta+1))}{n  c_{n+1}(\beta)} \cdot M_n$,
we have 
\begin{align}
E[X_{n+1} \mid \mathcal{F}_n] &\sim \dfrac{\Gamma(\beta+1)}{\Gamma(p(\beta+1))} \cdot n^{p (\beta+1)-\beta-1}\cdot M_{\infty}
\quad \mbox{as $n \to \infty$ a.s. on $\{M_{\infty}>0\}$}.
\label{X_n+1Asymp}
\end{align}
Noting that $p(\beta+1)-\beta \in (0,1)$, we have
\begin{align}
E[d_k^2 \mid \mathcal{F}_{k-1}] 
= E[ X_k \mid \mathcal{F}_{k-1}] \cdot (1-E[ X_k \mid \mathcal{F}_{k-1}]) 
&\sim E[ X_k \mid \mathcal{F}_{k-1}]
 \quad \mbox{as $k \to \infty$ a.s.}
\label{eq:Edk2asymp}
\end{align}
%
From \eqref{X_n+1Asymp} and \eqref{eq:Edk2asymp}, we have a.s. on $\{M_{\infty}>0\}$,
\begin{align}
\lim_{n\to\infty} \sum_{k=1}^n E[X_{n,k}^2  \mid  \mathcal{F}_{k-1}] &=  \lim_{n\to\infty}  \frac{\beta^2}{(p(\beta+1)-\beta)^2} \frac{1}{ n^{p(\beta+1)-\beta}} \sum_{k=1}^n  E[d_k^2 \mid \mathcal{F}_{k-1}] \notag \\
&=\frac{\beta^2}{(p(\beta+1)-\beta)^3} \cdot \frac{\Gamma(\beta+1)}{\Gamma(p(\beta+1))} \cdot M_\infty \notag \\
&=\frac{\beta^2}{(p(\beta+1)-\beta)^2} \cdot C(p,\beta)\cdot M_\infty, 
\label{;N_1}
\end{align}
and
\begin{align}
&\lim_{n\to\infty} \sum_{k=n+1}^\infty E[X_{n,k}^2  \mid  \mathcal{F}_{k-1}] \notag \\
&=\lim_{n\to\infty} \frac{C(p,\beta)^2}{n^{p(\beta+1)-\beta}} \frac{\Gamma(n+p(\beta+1))^2}{\Gamma (n+\beta)^2} \sum_{k=n+1}^\infty \frac{c_k (\beta)^2}{c_k (p(\beta+1))^2} \cdot E[d_k^2 \mid \mathcal{F}_{k-1}] \notag \\
&=\frac{(p(\beta+1))^2}{(p(\beta+1)-\beta)^3} \cdot \frac{\Gamma(\beta+1)}{\Gamma(p(\beta+1))} \cdot M_\infty =\frac{(p(\beta+1))^2}{(p(\beta+1)-\beta)^2} \cdot C(p,\beta) \cdot M_\infty.  \label{;N_2}
\end{align}
From \eqref{;N_1} and \eqref{;N_2} we have \eqref{:N} on $\{M_{\infty}>0\}$.
We can readily have  \eqref{;N_2} on $\{M_{\infty} = 0\}$.

Because \eqref{:CLB} on $\{M_{\infty} = 0\}$ is derived from \eqref{:N},
we show \eqref{:CLB}  on $\{M_{\infty} > 0\}$.  Using $|d_k| \le 1$ and \eqref{X_n+1Asymp},
we see that there exists a positive random variable $D_1$ independent of $k$ such that 
\begin{align}\label{:d4}
E[d_k^4  \mid  \mathcal{F}_{k-1}] \le E[d_k^2  \mid  \mathcal{F}_{k-1}] \le D_1 k^{p(\beta+1)-\beta-1}  \quad\mbox{on $\{M_{\infty}>0\}$}.
\end{align}
Hence, there exists a positive random variable $D_2$ independent of $n$,
\begin{align}
\sum_{k=1}^n E[ X_{n,k}^4 \mid  \mathcal{F}_{k-1}] &=\frac{\beta^4 }{(p(\beta+1)-\beta)^4} \dfrac{1}{n^{2( p(\beta+1)-\beta)}} \sum_{k=1}^n E[d_k^4 \mid  \mathcal{F}_{k-1}] \notag \\
&\le D_2 n^{-( p(\beta+1)-\beta)} 
\quad \mbox{on $\{M_{\infty}>0\}$}.\label{;X4_N}
\end{align}
From \eqref{:d4}, there exists a positive random variable $D_3$ independent of $n$,
\begin{align}
\sum_{k=n+1}^\infty E[ X_{n,k}^4 \mid  \mathcal{F}_{k-1}] 
&=\frac{C(p,\beta)^4}{n^{2(p(\beta+1)-\beta)}} \frac{\Gamma(n+p(\beta+1))^4}{\Gamma (n+\beta)^4} \sum_{k=n+1}^\infty \frac{c_k (\beta)^4}{c_k (p(\beta+1))^4} \cdot E[d_k^4 \mid \mathcal{F}_{k-1}] \notag \\
&\le D_3 n^{-( p(\beta+1)-\beta)} \quad \mbox{on $\{M_{\infty}>0\}$}.\label{;X4_N+1}
\end{align}
Combining \eqref{;X4_N} and \eqref{;X4_N+1}, we have $\sum_{k=1}^\infty E[ X_{n,k}^4 \mid  \mathcal{F}_{k-1}] \le (D_2 + D_3) n^{-( p(\beta+1)-\beta)}$ on $\{M_{\infty}>0\}$.
Because
$E[ X_{n,k}^2 \mathbf{1}_{\{ |X_{n,k}|\ge \varepsilon \}} \mid  \mathcal{F}_{k-1}] \le \varepsilon^{-2} \cdot E[ X_{n,k}^4\mid  \mathcal{F}_{k-1}]$,
we have \eqref{:CLB}.
\qed

\section*{Acknowledgements}

The problem studied here arose from a question asked by the referee of our earlier paper \cite{RoyTakeiTanemura24ECP}. 
R.R. thanks Keio University for its hospitality during multiple visits. M.T. and H.T. thank the Indian Statistical Institute for its hospitality during multiple visits. M.T. is partially supported by JSPS KAKENHI Grant Numbers JP19K03514 and JP22K03333. H.T. is partially supported by JSPS KAKENHI Grant Number JP19K03514, JP21H04432 and JP23H01077.


\end{document}